\documentclass[reqno]{amsart}

\usepackage[T1]{fontenc}
\usepackage[utf8]{inputenc}

\usepackage{lmodern}
\usepackage{microtype}

\usepackage{amsfonts,amssymb,amsthm,amsmath,graphicx,amscd,mathtools,mathrsfs}
\usepackage[shortlabels]{enumitem}

\usepackage[dvipsnames,svgnames,x11names]{xcolor}
\usepackage[all]{xy}
\usepackage[normalem]{ulem}

\usepackage{hyperref}
\hypersetup{
  colorlinks=true,
  linkcolor=MidnightBlue,
  citecolor=ForestGreen,
  urlcolor=BrickRed
}

\newtheorem{theor}{Theorem}[section]

\newtheorem{thm}[theor]{Theorem}
\newtheorem{prop}[theor]{Proposition}

\newtheorem{lem}[theor]{Lemma}

\newtheorem{conj}[theor]{Conjecture}
\newtheorem{thmx}{Theorem}

\theoremstyle{definition}

\newtheorem{remark}[theor]{Remark}

\newcommand{\B}{\mathbb{B}}
\newcommand{\N}{\mathbb{N}}

\newcommand{\R}{\mathbb{R}}
\newcommand{\C}{\mathbb{C}}

\newcommand{\de}{\partial}
\newcommand{\deb}{\bar\partial}
\newcommand{\W}{\Omega}
\newcommand{\w}{\omega}

\newcommand{\lmb}{\lambda}

\newcommand{\Aut}{\operatorname{Aut}}

\newcommand{\Ric}{\operatorname{Ric}}
\newcommand{\D}{D^{g_\Omega}}

\newcommand{\ov}[1]{\overline{#1}}

\title[Bergman--Einstein Rigidity for Hartogs Domains]{Bergman--Einstein Rigidity for Hartogs Domains over Bounded Homogeneous Domains}

\author{Roberto Mossa}
\address{(Roberto Mossa) Dipartimento di Matematica \\
         Universit\`a di Cagliari (Italy)}
        \email{roberto.mossa@unica.it}
        
        \thanks{
The author gratefully acknowledges support from INdAM and GNSAGA -- Gruppo Nazionale per le Strutture Algebriche, Geometriche e le loro Applicazioni, and from ProBiKi of Fondazione di Sardegna (Italy), CUP F83C26000350007.}

\subjclass[2020]{Primary 32A25, 32Q20; Secondary 32M10, 32Q15, 53C55}

\keywords{Bergman kernel, Bergman metric, Kähler--Einstein metric, Hartogs domain, bounded homogeneous domain}

\date{}

\begin{document}

\begin{abstract}
We prove a rigidity theorem for the Bergman metric on Hartogs domains over bounded
homogeneous domains. Let $\Omega\subset \mathbb C^n$ be a bounded homogeneous domain, let
$K_\Omega$ denote its Bergman kernel, and consider
$$
\Omega_{m,s}:=\{(z,\zeta)\in \Omega\times \mathbb C^m:\ \|\zeta\|^2<K_\Omega(z,\bar z)^{-s}\},
\qquad m\ge 1,\quad s>-C_\Omega.
$$
For $s\neq 0$, we prove that the following conditions are equivalent: the Bergman metric of
$\Omega_{m,s}$ is K\"ahler--Einstein; $\Omega_{m,s}$ is homogeneous; $\Omega_{m,s}$ is
biholomorphic to $\mathbb B^{n+m}$; and $\Omega\cong\mathbb B^n$ with $s=\frac1{n+1}$.

This gives a positive answer to Yau's question within this class and may be viewed as a
Cheng-type rigidity phenomenon beyond the smoothly bounded strictly pseudoconvex setting. The
proof combines the explicit formula for the Bergman kernel of $\Omega_{m,s}$ with the
structural invariants of the bounded homogeneous base.
\end{abstract}

\maketitle

\tableofcontents

\section{Introduction}

The Bergman metric is one of the most canonical K\"ahler metrics associated with a
bounded domain. A basic problem in several complex variables and complex differential geometry is
to understand how curvature properties of the Bergman metric constrain the complex geometry of
the underlying domain. A guiding question in this direction, going back to Yau
\cite{Yau1982ProblemSection}, is the following.

\vskip0.1cm

\noindent\textbf{Question.}
Let $D$ be a bounded domain and let $g_D$ be its Bergman metric, assumed to be complete. If
$g_D$ is K\"ahler--Einstein, must $D$ be a bounded homogeneous domain?

\vskip0.1cm

The question is natural because the Bergman metric of a bounded homogeneous domain is
K\"ahler--Einstein. Thus Yau's question asks whether, in this context, the converse implication
holds.

For strictly pseudoconvex domains with smooth boundary, Yau's question admits a positive
answer, following the resolution of a conjecture formulated by Cheng. Cheng conjectured that if
$D\subset \mathbb C^n$ is a bounded, smoothly bounded, strongly pseudoconvex domain whose
Bergman metric is K\"ahler--Einstein, then $D$ must be biholomorphic to the unit ball. This
conjecture was first established in complex dimension $2$ by Fu and Wong and by Nemirovskii and
Shafikov, and subsequently proved in full generality by Huang and Xiao
\cite{FuWong2005StrictlyPseudoconvexDomains,NemirovskiiShafikov2006ConjectureCheng,HuangXiao2021BergmanEinstein}.
It is also worth recalling that any bounded homogeneous domain with $C^2$ boundary is
biholomorphic to the unit ball, as shown independently by Wong and Rosay
\cite{Wong1977CharacterizationUnitBall,Rosay1979SurUneCaracterisation}. Therefore, in the
smoothly bounded strictly pseudoconvex setting, the K\"ahler--Einstein condition on the Bergman
metric is so rigid that it forces the domain to be biholomorphic to the unit ball.

The purpose of the present paper is to establish an analogous rigidity phenomenon in a different
and highly structured setting, namely that of Hartogs domains over bounded homogeneous bases.
These domains are in general neither smoothly bounded nor homogeneous, and therefore provide a
natural testing ground for Yau's question and for Cheng-type rigidity outside the strictly
pseudoconvex framework.

Let $\Omega\subset \mathbb C^n$ be a bounded homogeneous domain, let $K_\Omega$ be its Bergman
kernel, and consider the Hartogs domain
$$
\Omega_{m,s}:=
\{(z,\zeta)\in \Omega\times \mathbb C^m:\ \|\zeta\|^2<K_\Omega(z,\bar z)^{-s}\},
\qquad
m\in\mathbb N^*,
\qquad
s>-C_\Omega,
$$
Here \(C_\Omega\) is a positive structural constant. Notice that for \(s>0\)
the domains \(\Omega_{m,s}\) are bounded Hartogs domains, whereas the larger range
\(s>-C_\Omega\) is the natural range in which the Bergman kernel formula of
Ishi--Park--Yamamori applies; see Section~\ref{sec:proof}.

Hartogs domains over bounded homogeneous bases have been studied from several points of view; see,
for instance,
\cite{IshiParkYamamori2017BergmanKernel,Seo2018BiholomorphismsHartogsSiegel,PanWangZhang2016KahlerEinstein}.
The class considered here contains, as a special case, Hartogs domains over bounded symmetric
domains, which have been extensively investigated in the literature; see, e.g.,
\cite{WangYinZhangRoos2006KahlerEinstein,YinLuRoos2004NewClasses,LoiMossaZuddas2025BergmanMetric,MossaZedda2022SymplecticGeometry}
and the references therein.

Our main result gives a complete rigidity theorem for this class. It shows that, among the
domains $\Omega_{m,s}$ with $s\neq 0$, the K\"ahler--Einstein condition for the Bergman metric,
homogeneity, and biholomorphic equivalence to the unit ball are all equivalent.

\begin{thmx}\label{main}
Let $\Omega \subset \mathbb C^n$ be a bounded homogeneous domain, and let
$\Omega_{m,s}$ be the Hartogs domain defined above. Assume that $s \neq 0$.
Then the following conditions are equivalent:
\begin{enumerate}
    \item\label{main:ke} the Bergman metric of $\Omega_{m,s}$ is K\"ahler--Einstein;
    \item\label{main:hom} $\Omega_{m,s}$ is homogeneous;
    \item\label{main:ball} $\Omega_{m,s}$ is biholomorphic to $\mathbb B^{n+m}$;
    \item\label{main:base} $\Omega$ is biholomorphic to $\mathbb B^n$ and $s=\frac1{n+1}$.
\end{enumerate}
\end{thmx}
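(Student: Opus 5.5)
The cycle of implications I would prove is (4)$\Rightarrow$(3)$\Rightarrow$(2)$\Rightarrow$(1)$\Rightarrow$(4). The first three are routine, so almost all the work goes into showing that the Kähler--Einstein condition forces (4).

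For (4)$\Rightarrow$(3): take $\Omega=\mathbb B^n$. Then $K_{\mathbb B^n}(z,\bar z)=c_n(1-\|z\|^2)^{-(n+1)}$, and with $s=\frac1{n+1}$ the domain becomes $\{\|\zeta\|^2<c_n^{-s}(1-\|z\|^2)\}$. Note that $\Omega_{m,s}$ is defined through $K_\Omega$, which is only invariant up to a biholomorphic Jacobian factor. So I first check that a biholomorphism $\phi$ of the base induces the biholomorphism $(z,\zeta)\mapsto(\phi(z),\,J\phi(z)^{2s}\zeta)$ between the corresponding Hartogs domains, using a holomorphic branch of $J\phi^{2s}$, which exists on the simply connected base. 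After this, rescaling $\zeta$ by $c_n^{s/2}$ gives the ball. (3)$\Rightarrow$(2) is trivial. (2)$\Rightarrow$(1) is the classical fact that the Bergman metric of a bounded homogeneous domain is Kähler--Einstein, since $\log(K/\det g)$ is an $\Aut$-invariant pluriharmonic function. For $s<0$ the domain is unbounded, and there I would rely on the Ishi--Park--Yamamori setting, where the Bergman space is still nontrivial and the same invariance argument applies.

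For (1)$\Rightarrow$(4): use the Ishi--Park--Yamamori formula, which should have the shape
\[
K_{\Omega_{m,s}}\bigl((z,\zeta),\overline{(z,\zeta)}\bigr)=K_\Omega(z,\bar z)^{1+ms}\,F\bigl(t\bigr),\qquad t=K_\Omega(z,\bar z)^{s}\|\zeta\|^2\in[0,1),
\]
with $F(t)=\sum_k c_k t^k$ and coefficients $c_k$ given by ratios of generalized Gamma functions attached to the structure constants (rank $r$, multiplicities $d_{ij}$, $b_i$) of the homogeneous cone or Siegel domain realizing $\Omega$. Since the base has a transitive automorphism group and fibre rotations act, it suffices to test the Einstein equation $\det(g_{i\bar j})=c\,K$ along the slice $z=z_0$, $\zeta=(\zeta_1,0,\dots,0)$. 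Computing $\det\partial\bar\partial\log K$ there gives an ODE identity in $t$ of the type
\[
(\text{const})\,K_\Omega(z_0)^{\alpha}\,\phi(t)^{m-1}\bigl(\phi+t\phi'\bigr)\cdot\det\nolimits_\Omega(\ldots)=c\,K_\Omega(z_0)^{1+ms}F(t),
\]
with $\phi=((1+ms)\cdot\text{stuff}+\ldots)$ built from $(\log F)'$. The base contributes $\det(g_\Omega+\cdots)$, where the extra term $s\,t\,\phi(t)\,g_\Omega$-type correction arises because $\partial\bar\partial \log K_\Omega$ enters via $t$. Matching powers of $K_\Omega$ is automatic by homogeneity. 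The real content is that $F$ must satisfy a rational ODE, and this forces $F(t)=a(1-t)^{-N}$ with $N=n+m+1$, which is the ball profile. This is the same mechanism as in the known bounded-symmetric Hartogs computations (Wang--Yin--Zhang--Roos).

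The main obstacle is the last step: converting $F=a(1-t)^{-(n+m+1)}$ into $\Omega\cong\mathbb B^n$ and $s=\frac1{n+1}$. My plan is to compare Taylor coefficients. The ratio $c_{k+1}/c_k$ coming from the Gindikin-Gamma expression is a product of $r$ linear-in-$k$ factors, while the ball profile gives $(N+k)/(k+1)$. Matching these as rational functions of $k$ should force rank $r=1$, and I would use the Vinberg/Gindikin description to make the cancellation pattern explicit. Once $r=1$, I would invoke the fact that a rank-one bounded homogeneous domain is the ball. The remaining identity then pins down $s(n+1)=1$. If cancellation of factors prevents a clean rank argument, I would instead use the first two coefficients $c_1/c_0$ and $c_2/c_1$, which involve $\sum b_i$, $\sum d_{ij}$, and show via inequalities among these invariants that equality holds only in the rank-one case.
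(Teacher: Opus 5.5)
Your cycle (4)$\Rightarrow$(3)$\Rightarrow$(2)$\Rightarrow$(1) matches the paper. So does your reduction of the Einstein condition to the radial identity $(A+s\,t\,p)^n p^{m-1}(p+t\,p')=C\,R(t)$, with $p=R'/R$ and $A=ms+1$. Writing $\det g=c\,K$, however, presupposes that the Einstein constant is $-1$ and that the pluriharmonic factor $e^{f+\bar f}$ is constant. The paper proves both: the first by restricting to the zero section, the second by showing that a pluriharmonic function which is radial on the fibre balls is constant.

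The genuine gap is the sentence ``$F$ must satisfy a rational ODE, and this forces $F(t)=a(1-t)^{-(n+m+1)}$'', for which you give no mechanism. The identity by itself does not determine $R$. For $s=\frac1{n+1}$, every $a(1-bt)^{-(n+m+1)}$ with $a,b>0$ solves it for a suitable constant $C$, so not even the location of the pole is forced. Your sketch also never uses $s\neq0$, and any valid argument must. At $s=0$ the Bergman metric of $\Omega\times\mathbb B^m$ is K\"ahler--Einstein and $R\propto(1-t)^{-(m+1)}$, not $(1-t)^{-(n+m+1)}$. Viewing $F$ only as a power series with Gindikin--Gamma coefficients does not supply the structure you need.

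The missing idea is the exact rational form of the Ishi--Park--Yamamori kernel, $R(t)=\sum_{j=0}^n c_j(s)(j+m)!\,(1-t)^{-(j+m+1)}$. Equivalently, $R=y^{m+1}P(y)$ with $y=(1-t)^{-1}$ and $P$ a polynomial of degree exactly $n$, precisely because $s\neq0$. Suppose $P$ had a root $\alpha\neq0$ of multiplicity $q$. Then $\alpha\neq1$, since $P(1)=R(0)>0$, so $t(\alpha)\neq0$. At $y=\alpha$:
\begin{itemize}
\item $p$ has a simple pole;
\item $A+s\,t\,p$ has a simple pole (again using $s\neq0$);
\item $p^{m-1}$ has a pole of order $m-1$;
\item $p+t\,p'$ has a double pole with leading coefficient $-q\alpha^4t(\alpha)\neq0$.
\end{itemize}
The left-hand side would then have a pole of order $n+m+1$ at a point where $R$ is holomorphic, a contradiction. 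Hence $P=c\,y^n$.

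Your final step also does not work as planned. The Taylor coefficients of $R$ are $\frac{(k+m)!}{k!}F_\Omega\bigl(s(k+m)\bigr)$. Their consecutive ratio is therefore a quotient of two polynomials of degree $n+1$ in $k$, not a product of $r$ linear factors, and the rank cannot be read off from it. What the collapse actually gives is $F_\Omega(sx)=\frac1{n!}(x+1)(x+2)\cdots(x+n)$, i.e.\ $\{a_{k,i}\}=\{s,2s,\dots,ns\}$ with multiplicities. To conclude you need the explicit form $a_{k,i}=\frac{i+q_k/2}{m_k+1+q_k}$, where $m_k=1+p_k+b_k$:
\begin{itemize}
\item summing gives $\sum_i a_{k,i}=m_k/2$, which together with the multiset identity yields $s=\frac1{n+1}$;
\item the smallest value $a_{k_0,1}=\frac1{n+1}$ forces $m_{k_0}=n+\frac{n-1}{2}q_{k_0}$, hence $m_{k_0}=n$ and $r=1$.
\end{itemize}
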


Thus, in the family of Hartogs domains over bounded homogeneous bases, Yau's question has a
positive answer in the strongest possible form: if the Bergman metric of $\Omega_{m,s}$ is
K\"ahler--Einstein and $s\neq 0$, then $\Omega_{m,s}$ is not merely homogeneous, but
biholomorphic to the unit ball. Moreover, this happens only in the expected trivial case: the
base is itself the unit ball and the exponent is forced to be $s=\frac1{n+1}$.

For the K\"ahler--Einstein aspect, Theorem~\ref{main} extends the corresponding ball
characterization obtained for Cartan--Hartogs domains in
\cite[Theorem~1.2]{LoiMossaZuddas2025BergmanMetric} to the broader setting of Hartogs domains
over bounded homogeneous bases.

The equivalence with homogeneity in Theorem~\ref{main} may also be viewed as a rigidity
statement for the automorphism geometry of the domains $\Omega_{m,s}$. For $s\neq 0$, no
genuinely nontrivial Hartogs domain in this family is homogeneous: homogeneity of
$\Omega_{m,s}$ forces $\Omega\cong \mathbb B^n$ and $s=\frac1{n+1}$. For Cartan--Hartogs
domains, analogous homogeneity characterizations are known; see
\cite[Theorem~1.2]{LoiMossaZuddas2025BergmanMetric}. Earlier references trace this phenomenon
back to unpublished communications of G.~Roos, as recorded for instance in
\cite[Lemma~3.1]{AhnByunPark2012AutomorphismsHartogsDomain}.

\begin{remark}\label{rem:szero}
The condition $s\neq 0$ is essential. Indeed, if $s=0$, then
$$
\Omega_{m,0}=\Omega\times \mathbb B^m,
$$
and the Bergman metric of $\Omega_{m,0}$ is the product of the Bergman metrics of $\Omega$
and $\mathbb B^m$. Since both factors are K\"ahler--Einstein with Einstein constant $-1$,
the product metric is again K\"ahler--Einstein. In general, however,
$\Omega\times \mathbb B^m$ is not biholomorphic to $\mathbb B^{n+m}$. Thus the conclusion
of Theorem~\ref{main} fails when $s=0$.
\end{remark}

A related rigidity result was recently obtained by Palmieri
\cite{Palmieri2025BergmanMetricsInducedBall}, who proved that, among Hartogs domains over
bounded homogeneous bases, the ball is also singled out by the requirement that the Bergman
metric be, up to rescaling, induced by the Bergman metric of a finite-dimensional unit ball.

We briefly describe the strategy of the proof of Theorem \ref{main}. The first ingredient is the explicit formula of
Ishi--Park--Yamamori for the Bergman kernel of $\Omega_{m,s}$
\cite{IshiParkYamamori2017BergmanKernel}, which makes it possible to reduce the Einstein
equation for the Bergman metric of the total space to a scalar identity in one radial variable.
The second ingredient is the structure theory of bounded homogeneous domains, encoded in the
polynomial
$$
F_\Omega(\sigma)=\prod_{k=1}^r\prod_{i=1}^{1+p_k+b_k}
\left(1+\frac{\sigma}{a_{k,i}}\right),
$$
whose zeros reflect the intrinsic invariants of the base. The interaction between these two
ingredients forces the radial part of the Bergman kernel to collapse to a pure pole. From this
collapse one recovers both the critical value $s=\frac1{n+1}$ and the rank-one condition on
$\Omega$, hence the ball characterization.

The rigidity established here also suggests a soliton counterpart. More recently, the analogue
of Cheng's conjecture has been investigated in the broader framework of K\"ahler--Ricci
solitons. In particular, if $D\subset\mathbb C^n$ is a bounded strictly pseudoconvex domain
with smooth boundary and $g_D$ is its Bergman metric, then Sha
\cite{Sha2025KahlerRicciSoliton} proved that whenever $g_D$ is a K\"ahler--Ricci soliton, the
domain $D$ must be biholomorphic to the unit ball.

This result, together with the rigidity phenomena established for Cartan--Hartogs domains
\cite[Theorem~1.2]{LoiMossaZuddas2025BergmanMetric} and with the theorem proved in
\cite{LoiMossa2023HolomorphicIsometries}, which shows that a K\"ahler--Ricci soliton induced by
the homogeneous metric of a homogeneous bounded domain is necessarily K\"ahler--Einstein,
suggests that the soliton analogue of Theorem~\ref{main} should also hold.

\begin{conj}\label{conj:krs}
Let $\Omega \subset \mathbb C^n$ be a bounded homogeneous domain, and let $\Omega_{m,s}$ be the Hartogs domain defined above. Assume that $s \neq 0$. If the Bergman metric of $\Omega_{m,s}$ is a K\"ahler--Ricci soliton, then
$$
\Omega_{m,s}\cong \mathbb B^{n+m}.
$$
\end{conj}

The next section proves Theorem~\ref{main}. We first recall the necessary structural notation
and the Bergman kernel formula.


\section{Proof of Theorem~\ref{main}}\label{sec:proof}

\subsection{Preliminaries}

We begin by recalling the structural notation and the explicit Bergman kernel formula needed in the proof of Theorem~\ref{main}.

Let $\W\subset \C^n$ be a bounded homogeneous domain. Following
\cite[Proposition~2.2, Proposition~2.3, equation~(9), Theorem~2.4]{IshiParkYamamori2017BergmanKernel},
one associates to $\W$ a rank $r\in\N^*$ and integers
$$
p_k,\ q_k,\ b_k\in\N,
\qquad
1\leq k\leq r.
$$
For each $k$ and each index $1\le i\le 1+p_k+b_k$, one defines
\begin{equation*}\label{aki}
a_{k,i}:=\frac{i+\frac{q_k}{2}}{2+p_k+q_k+b_k}.
\end{equation*}
The admissible range of the parameter $s$ is
$$
s>-\min_{k,i} a_{k,i},
$$
so we set
\begin{equation*}\label{comg}
C_\W:=\min_{k,i}\{a_{k,i}\}>0.
\end{equation*}
The associated structural polynomial is
\begin{equation*}\label{Fom}
F_\W(\sigma):=
\prod_{k=1}^r\prod_{i=1}^{1+p_k+b_k}
\left(1+\frac{\sigma}{a_{k,i}}\right).
\end{equation*}
Its degree is
\begin{equation}\label{deg}
\deg F_\W=\sum_{k=1}^r(1+p_k+b_k)=\dim_\C\W=n.
\end{equation}

For $m\in\N^*$ and $s>-C_\W$, define
\begin{equation*}\label{dom}
\W_{m,s}:=
\{(z,\zeta)\in \W\times \C^m:\ \|\zeta\|^2<K_\W(z,\ov z)^{-s}\}.
\end{equation*}
Let us write the structural polynomial in terms of the Pochhammer symbol. We use the convention
\[
(x+1)_0:=1,\qquad (x+1)_j:=(x+1)(x+2)\cdots(x+j)\quad\text{for }j\ge1.
\]
Thus we write
\begin{equation}\label{poch}
F_\W(sx)=\sum_{j=0}^n c_j(s)(x+1)_j.
\end{equation}
Then \cite[Theorem~4.4]{IshiParkYamamori2017BergmanKernel} gives the diagonal Bergman kernel
of $\W_{m,s}$ in the form
\begin{equation}\label{kdiag}
K_{\W_{m,s}}(z,\zeta)
=
\frac{K_\W(z,\ov z)^{ms+1}}{\pi^m}
\sum_{j=0}^n
\frac{c_j(s)(j+m)!}{(1-t)^{j+m+1}},
\qquad
t:=K_\W(z,\ov z)^s\|\zeta\|^2.
\end{equation}

It is convenient to set
\begin{equation}\label{Adef}
A:=ms+1
\end{equation}
and
\begin{equation}\label{Rdef}
R(t):=
\sum_{j=0}^n \frac{A_j}{(1-t)^{j+m+1},
\qquad
A_j:=c_j(s)(j+m)!.
}
\end{equation}
Then \eqref{kdiag} becomes
\begin{equation}\label{kR}
K_{\W_{m,s}}(z,\zeta)
=
\frac{K_\W(z,\ov z)^A}{\pi^m}R(t),
\qquad
t=K_\W(z,\ov z)^s\|\zeta\|^2.
\end{equation}
Accordingly,
\begin{equation}\label{pot}
\psi_{\W_{m,s}}(z,\zeta):=\log K_{\W_{m,s}}(z,\zeta)
=
A\log K_\W(z,\ov z)+\log R(t)-m\log\pi
\end{equation}
is a real-analytic K\"ahler potential for the Bergman metric $g_{\W_{m,s}}$.

We shall also use the following standard fact.

\begin{lem}\label{baseke}
Let $\W\subset \C^n$ be a bounded homogeneous domain, and let $g_\W$ be its Bergman metric.
Then $g_\W$ is K\"ahler--Einstein with Einstein constant $-1$. In particular, there exists
a positive constant $C_1$ such that
\begin{equation}\label{detbase}
\det g_\W(z)=C_1\,K_\W(z,\ov z).
\end{equation}
\end{lem}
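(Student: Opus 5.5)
The plan is to use homogeneity to reduce the Einstein property to an invariance statement about $\det g_\W/K_\W$, and then to evaluate that quotient at a single point. Recall the transformation rule for the Bergman kernel: for $\varphi\in\Aut(\W)$,
$$
K_\W(\varphi(z),\ov{\varphi(z)})\,|\det J\varphi(z)|^2=K_\W(z,\ov z).
$$
Since $\log K_\W$ is a potential for $g_\W$, and $\log|\det J\varphi|^2$ is pluriharmonic, every $\varphi\in\Aut(\W)$ is an isometry of $g_\W$. Hence the matrix of $g_\W$ transforms as
$$
g_\W(\varphi(z))=(J\varphi(z)^{-1})^{*}\,g_\W(z)\,J\varphi(z)^{-1},
$$
and taking determinants gives
$$
\det g_\W(\varphi(z))\,|\det J\varphi(z)|^2=\det g_\W(z).
$$

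Both $\det g_\W$ and $K_\W$ are therefore multiplied by the same factor $|\det J\varphi(z)|^{-2}$ under any automorphism. It follows that the quotient
$$
h(z):=\frac{\det g_\W(z)}{K_\W(z,\ov z)}
$$
is a positive real-analytic function invariant under $\Aut(\W)$. Because $\W$ is homogeneous, $\Aut(\W)$ acts transitively, so $h$ is constant; call this constant $C_1>0$. This proves \eqref{detbase}.

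The Einstein property then follows by a direct computation. The Ricci form is
$$
\Ric(g_\W)=-\tfrac{i}{2}\de\deb\log\det g_\W=-\tfrac{i}{2}\de\deb\bigl(\log C_1+\log K_\W\bigr)=-\w_\W,
$$
where $\w_\W=\tfrac{i}{2}\de\deb\log K_\W$ is the Kähler form of $g_\W$. Thus $g_\W$ is Kähler--Einstein with constant $-1$, using the normalization of the Bergman metric in which its potential is $\log K_\W$.

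The only point that needs care is positivity and nondegeneracy. The Bergman metric of a bounded domain is positive definite, so $\det g_\W>0$ and $h$ is well defined. Beyond this there is no real obstacle: the argument is standard and uses only the transformation rule together with transitivity of $\Aut(\W)$.
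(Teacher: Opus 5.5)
Your proof is correct and uses the same key point as the paper: the ratio $\det g_\W/K_\W$ is invariant under $\Aut(\W)$, and transitivity then makes it constant. The only difference is the order of deduction. The paper cites the Einstein property (Kobayashi, Kaneyuki) and then derives the constancy, whereas you derive $\Ric(g_\W)=-\w_\W$ from the constancy. Your version is self-contained, and it is exactly the argument the paper itself uses for the implication (2)$\Rightarrow$(1) in the main theorem.
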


\begin{proof}
It is well known that the Bergman metric of a bounded homogeneous domain is
K\"ahler--Einstein with Einstein constant \(-1\); see
\cite[Theorem~4.1]{Kobayashi1959GeometryBounded} and
\cite{Kaneyuki1971BOOKHomogeneousBoundedDomains}. In particular,
$
\Ric(g_\W)=-\w_\W.
$
Since
$
\Ric(g_\W)=-\frac{i}{2}\de\deb\log\det g_\W
$
and
$
\w_\W=\frac{i}{2}\de\deb\log K_\W,
$
it follows that \(\de\deb\log(\det g_\W/K_\W)=0\). The ratio \(\det g_\W/K_\W\) is invariant
under biholomorphisms, hence in particular under \(\Aut(\W)\). As \(\W\) is homogeneous, this
ratio is constant, and \eqref{detbase} follows.
\end{proof}

\subsection{The Einstein identity and the determinant formula}

\begin{lem}\label{ein}
Assume that \(g_{\W_{m,s}}\) is K\"ahler--Einstein with Einstein constant \(\lmb\). Then, on
every simply connected coordinate neighborhood \(U\subset \W_{m,s}\), there exists a holomorphic
function \(f\in\mathcal O(U)\) such that
\begin{equation}\label{localein}
\det(g_{\alpha\ov\beta})=e^{f+\ov f}K_{\W_{m,s}}^{-\lmb}
\qquad\text{on }U.
\end{equation}
\end{lem}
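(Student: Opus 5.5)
The approach is to turn the Einstein condition into the vanishing of $\de\deb$ of a single real-analytic function, and then use the standard fact that a pluriharmonic function on a simply connected domain is the real part of a holomorphic function.

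First fix the normalizations. In coordinates $(z,\zeta)$, and consistently with the conventions in the proof of Lemma~\ref{baseke}, we have $g_{\alpha\ov\beta}=\de_\alpha\deb_\beta\psi_{\W_{m,s}}$ with $\psi_{\W_{m,s}}=\log K_{\W_{m,s}}$ as in \eqref{pot}. The Kähler form is $\w=\frac i2\de\deb\log K_{\W_{m,s}}$, and the Ricci form is $\Ric=-\frac i2\de\deb\log\det(g_{\alpha\ov\beta})$. Here $\det(g_{\alpha\ov\beta})$ is a positive real-analytic function on $\W_{m,s}$, because the Bergman metric of a bounded domain is positive definite; for $s<0$ the domain is unbounded, but we will take positivity of $g_{\W_{m,s}}$ as part of the standing setup, as the paper does in calling it a metric. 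The Einstein hypothesis $\Ric=\lmb\w$ then reads
\[
\de\deb\bigl(\log\det(g_{\alpha\ov\beta})+\lmb\log K_{\W_{m,s}}\bigr)=0 .
\]
So the real function $h:=\log\det(g_{\alpha\ov\beta})+\lmb\log K_{\W_{m,s}}$ is pluriharmonic on $\W_{m,s}$. This uses $K_{\W_{m,s}}>0$ on the diagonal, which is immediate from \eqref{kR}.

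Next, on a simply connected coordinate neighborhood $U$, a real pluriharmonic function is the real part of a holomorphic function. The one-form $\eta:=i(\deb h-\de h)=d^c h$ is real and closed, because $d\eta=2i\de\deb h=0$. By simple connectivity, $\eta=dv$ for some real function $v$ on $U$. Then $F:=h+iv$ satisfies $\deb F=0$, so $F$ is holomorphic. Setting $f:=F/2$ gives $h=f+\ov f$.

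Exponentiating yields $\det(g_{\alpha\ov\beta})=e^{f+\ov f}K_{\W_{m,s}}^{-\lmb}$ on $U$, which is \eqref{localein}.

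None of these steps is a real obstacle; the only care needed is sign and normalization consistency. If $\Ric$ and $\w$ were normalized with different constants, $\lmb$ would be rescaled, so I would fix the convention above, matching Lemma~\ref{baseke}, where the base satisfies $\lmb=-1$. I would also state explicitly that $U$ need only be simply connected, and not star-shaped, for the primitive $v$ to exist.
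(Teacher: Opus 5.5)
Your proof is correct and follows the same route as the paper: you rewrite $\Ric=\lmb\w$ as $\de\deb\bigl(\log\det(g_{\alpha\ov\beta})+\lmb\log K_{\W_{m,s}}\bigr)=0$, write this pluriharmonic function as $f+\ov f$ on the simply connected $U$, and exponentiate; you also spell out the $d^c$-primitive argument, which the paper takes as standard. One small correction: positivity of $K_{\W_{m,s}}$ on the diagonal is not immediate from \eqref{kR}, because the coefficients $c_j(s)$ need not be nonnegative (for $n=1$ one has $c_0(s)=1-2s$). It follows instead from the general extremal property of the Bergman kernel, and the paper in fact uses it in the opposite direction, to deduce $R(0)>0$.
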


\begin{proof}
The Einstein equation reads $\Ric(g_{\W_{m,s}})=\lmb\,\w_{\W_{m,s}}$. In local coordinates,
\[
-\frac{i}{2}\de\deb\log\det(g_{\alpha\ov\beta})
=
\lmb\,\frac{i}{2}\de\deb\log K_{\W_{m,s}},
\]
because $\log K_{\W_{m,s}}$ is a local K\"ahler potential of the Bergman metric. Hence
\[
\de\deb\bigl(\log\det(g_{\alpha\ov\beta})+\lmb\log K_{\W_{m,s}}\bigr)=0.
\]
On a simply connected neighborhood, the real-valued function in parentheses is pluriharmonic,
hence the real part of a holomorphic function $2f$. Exponentiating gives \eqref{localein}.
\end{proof}

\begin{lem}\label{lmb}
Under the assumptions of Lemma~\ref{ein}, one has $\lmb=-1$.
\end{lem}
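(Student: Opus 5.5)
Our plan is to determine $\lmb$ by comparing the two sides of \eqref{localein} along the fibre direction as $t\to1^-$, that is, near the boundary points with $\zeta\neq0$, over a fixed base point $z_0\in\W$. First we compute $\det g_{\W_{m,s}}$ from the potential \eqref{pot}. Freeze $z=z_0$ and use coordinates adapted to the Hartogs structure. Set $\phi:=\log K_\W$, so that $t=e^{s\phi}\|\zeta\|^2$. The Levi matrix of $A\phi+\log R(t)$ has a block structure. Rotational symmetry in $\zeta$ and the standard computation for potentials of the form $\Phi(z)+h(e^{s\phi}\|\zeta\|^2)$ (as in Cartan--Hartogs computations) give
\[
\det g_{\W_{m,s}}=e^{ms\phi}\,\bigl(h'(t)\bigr)^{m-1}\bigl(h'(t)+t h''(t)\bigr)\cdot \det\bigl((A+s\,t h'(t))\,\de\deb\phi\bigr),
\qquad h:=\log R .
\]
By Lemma~\ref{baseke}, the last factor equals $(A+sth'(t))^n C_1 K_\W$. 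Hence $\det g_{\W_{m,s}}=C_1K_\W^{ms+1}\,G(t)$, where $G$ is an explicit rational-type function of $t$ alone.

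Next, we compare this with the right-hand side of \eqref{localein}, where $K_{\W_{m,s}}^{-\lmb}=\pi^{m\lmb}K_\W^{-A\lmb}R(t)^{-\lmb}$. Both sides are invariant under the rotations $\zeta\mapsto e^{i\theta}\zeta$. We will argue as follows. The function $\log\det g+\lmb\log K_{\W_{m,s}}$ is pluriharmonic on $\W_{m,s}$ and depends on $z$ only through $\phi$ and on $\zeta$ only through $\|\zeta\|^2$. Restricting it to the slice $\{z_0\}\times\{\|\zeta\|^2<K_\W(z_0)^{-s}\}$, we obtain a radial pluriharmonic function on a ball in $\C^m$, which is therefore constant. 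Consequently $G(t)R(t)^{\lmb}$ is constant in $t\in[0,1)$.

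The final step is the asymptotics as $t\to1^-$. Let $N$ be the largest index $j$ with $A_j\neq0$; at least $A_0\neq0$, since $c_0(s)=F_\W(s\cdot(-1))$ when evaluated suitably, and more simply $R>0$ forces some $A_j\neq0$. Then $R(t)\sim A_N(1-t)^{-(N+m+1)}$. This gives $h'\sim (N+m+1)(1-t)^{-1}$ and $h'+th''\sim (N+m+1)(1-t)^{-2}$, and if $s\neq0$ then $A+sth'\sim s(N+m+1)(1-t)^{-1}$. So $G(t)\sim c\,(1-t)^{-(m+1+n)}$. Constancy of $GR^{\lmb}$ forces
\[
-(n+m+1)=\lmb\,(N+m+1),
\]
which gives $\lmb<0$. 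The main obstacle is then to show that $N=n$, i.e.\ $c_n(s)\neq0$. Comparing leading coefficients in \eqref{poch}, $c_n(s)$ equals the leading coefficient of $F_\W(sx)$, namely $s^n\prod_{k,i}a_{k,i}^{-1}$, which is nonzero for $s\neq0$. Hence $N=n$ and $\lmb=-1$. (If the $s\neq0$ assumption were unavailable, the factor $A+sth'$ would be bounded and one would get $\lmb=-(m+1)/(N+m+1)$ instead; this is why that case is excluded.)
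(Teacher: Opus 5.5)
Your argument is correct for $s\neq0$, but it takes a different route from the paper. The paper never leaves the zero section $\Sigma=\{\zeta=0\}$. There $t=0$ and the metric splits as $A\,g_\W\oplus p(0)K_\W^sI_m$, so $\det g(z,0)=C_2K_\W^A$, while $K_{\W_{m,s}}(z,0)=K_\W^AR(0)/\pi^m$. Restricting \eqref{localein} to $\Sigma$ gives $e^{h+\ov h}=C_3K_\W^{A(1+\lmb)}$, and applying $\de\deb\log$ yields $A(1+\lmb)\,\de\deb\log K_\W=0$. Hence $\lmb=-1$, because $A>0$ by positivity of the horizontal block.

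You instead work along the fibre out to $t\to1^-$. That requires the full fibrewise determinant formula (the paper's Lemma~\ref{det}), the radial pluriharmonicity lemma, the exact pole order of $R$ at $t=1$, and $c_n(s)\neq0$. The paper's route is shorter and more robust: it uses only $R(0)>0$, $p(0)>0$ and $\det g_\W=C_1K_\W$, does not depend on how $R$ behaves near $t=1$, and works for every admissible $s$ at once. Your route buys something in return. The identity $G(t)R(t)^{\lmb}=\mathrm{const}$ becomes exactly Proposition~\ref{ode} once $\lmb=-1$. It also exhibits $\lmb$ as the ratio of pole orders $-(n+m+1)/(N+m+1)$, the same mechanism behind Theorem~\ref{coll}.

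The boundary asymptotics are in fact dispensable. You already have $\log\det g+\lmb\log K_{\W_{m,s}}=\mathrm{const}+A(1+\lmb)\log K_\W(z)+\log\bigl(G(t)R(t)^{\lmb}\bigr)$, and you have shown the last term is constant. Pluriharmonicity in $z$ then forces $A(1+\lmb)=0$, and $A>0$ from the horizontal block at $t=0$. That is precisely the paper's argument.

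Two small corrections.

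\begin{enumerate}
\item The lemma assumes only the hypotheses of Lemma~\ref{ein}, so $s=0$ is not excluded, contrary to your closing remark. Your own formula covers it: $F_\W(0\cdot x)\equiv1$ gives $c_0(0)=1$ and $c_j(0)=0$ for $j\ge1$. So $N=0$ and $\lmb=-(m+1)/(0+m+1)=-1$.
\item The aside ``at least $A_0\neq0$'' is false in general. Since $c_0(s)=F_\W(-s)$, it vanishes for example when $\W=\B^n$ and $s=\frac1{n+1}$, where $R$ is a pure pole. This is harmless, because you only use the top index $N$.
\end{enumerate}
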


\begin{proof}
Restrict \eqref{localein} to the zero section
\[
\Sigma:=\{(z,0)\in\W_{m,s}: z\in\W\}.
\]
Since $t=K_\W(z,\ov z)^s\|\zeta\|^2$, one has $t=0$ on $\Sigma$, and \eqref{kR} yields
\begin{equation}\label{kzero}
K_{\W_{m,s}}(z,0)=\frac{K_\W(z,\ov z)^A}{\pi^m}R(0).
\end{equation}

Now compute the metric tensor on $\Sigma$. We use the convention that
$i,j\in\{1,\dots,n\}$ denote base indices, whereas
$\mu,\nu\in\{1,\dots,m\}$ denote fiber indices. Set
\begin{equation*}\label{pdef}
p(t):=\frac{R'(t)}{R(t)}.
\end{equation*}
Since $t$ contains the factor $\|\zeta\|^2$, the mixed base-fiber block vanishes at $\zeta=0$.
Moreover, by differentiating the potential \eqref{pot}, and using the definition of $p(t)$ above,
\[
(g_{\W_{m,s}})_{i\ov j}(z,0)=A(g_\W)_{i\ov j}(z),
\qquad
(g_{\W_{m,s}})_{\mu\ov\nu}(z,0)=p(0)K_\W(z,\ov z)^s\delta_{\mu\nu}.
\]

Hence
\[
g_{\W_{m,s}}(z,0)=A\,g_\W(z)\oplus p(0)K_\W(z,\ov z)^s I_m,
\]
and therefore
\[
\det(g_{\alpha\ov\beta})(z,0)
=
A^n p(0)^m K_\W(z,\ov z)^{ms}\det g_\W(z).
\]
Using \eqref{detbase} and the definition of $A$ in \eqref{Adef}, we obtain 
\begin{equation}\label{detzero2}
\det(g_{\alpha\ov\beta})(z,0)=C_2K_\W(z,\ov z)^A.
\end{equation}

Restricting \eqref{localein} to $\Sigma$ and comparing with \eqref{kzero} and \eqref{detzero2},
we find
\[
e^{h+\ov h}=C_3K_\W(z,\ov z)^{A(1+\lmb)}
\]
for a holomorphic function $h$ on the base chart. Applying $\de\deb\log$ to both sides, we obtain
\[
0=A(1+\lmb)\,\de\deb\log K_\W(z,\ov z).
\]

Since
$
(g_{\W_{m,s}})_{i\ov j}(z,0)=A(g_\W)_{i\ov j}(z)
$
and $g_{\W_{m,s}}$ is positive definite, necessarily $A>0$. Moreover,
$\frac{i}{2}\de\deb\log K_\W$ is the K\"ahler form of the Bergman metric of $\W$, hence it is
not identically zero. Therefore $1+\lmb=0$, that is, $\lmb=-1$.
\end{proof}
From now on, the local Einstein identity becomes
\begin{equation}\label{localein2}
\det(g_{\alpha\ov\beta})=e^{f+\ov f}K_{\W_{m,s}}.
\end{equation}

To compute the determinant away from the zero section, we use the diastasis of the base.

\begin{lem}\label{lem:diastasis}
Let $\Omega\subset \C^n$ be a bounded domain, let $K_\Omega(z,\bar w)$ be its Bergman kernel,
and fix $z_0\in\Omega$. Define Calabi's diastasis function \cite{Calabi1953IsometricImbedding} centered at $z_0$ by
\[
\D_{z_0}(z,\bar z):=
\log\left(
\frac{K_\Omega(z_0,\bar z_0)K_\Omega(z,\bar z)}
{K_\Omega(z,\bar z_0)K_\Omega(z_0,\bar z)}
\right).
\]
Then
\[
\frac{\partial \D_{z_0}}{\partial z_i}(z_0,\bar z_0)=0,
\qquad
\frac{\partial \D_{z_0}}{\partial \bar z_j}(z_0,\bar z_0)=0,
\]
and
\[
\frac{\partial^2 \D_{z_0}}{\partial z_i\partial \bar z_j}(z_0,\bar z_0)
=
\frac{\partial^2\log K_\Omega}{\partial z_i\partial \bar z_j}(z_0,\bar z_0).
\]
\end{lem}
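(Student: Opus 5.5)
The plan is to write the diastasis as a sum of four logarithms and exploit the holomorphic/antiholomorphic structure of the off-diagonal Bergman kernel. The kernel $K_\W(z,\ov w)$ is holomorphic in $z$, antiholomorphic in $w$, and does not vanish near the diagonal, since $K_\W(z_0,\ov z_0)>0$. Hence on a small neighborhood of $z_0$ we can choose branches of logarithm and write
\[
\D_{z_0}(z,\ov z)=\log K_\W(z_0,\ov z_0)+\log K_\W(z,\ov z)-\log K_\W(z,\ov z_0)-\log K_\W(z_0,\ov z).
\]
The total expression is real and independent of the choice of branch. Here $\log K_\W(z,\ov z_0)$ is holomorphic in $z$, and $\log K_\W(z_0,\ov z)$ is antiholomorphic in $z$, being the complex conjugate of the former by Hermitian symmetry $K_\W(z_0,\ov z)=\ov{K_\W(z,\ov z_0)}$.

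For the second-derivative identity, apply $\de^2/\de z_i\de\ov z_j$ to this decomposition. The constant term is killed. The holomorphic term is killed by $\de/\de\ov z_j$, and the antiholomorphic term by $\de/\de z_i$. What survives is $\de^2\log K_\W(z,\ov z)/\de z_i\de\ov z_j$, and this holds at every point near $z_0$, in particular at $z_0$.

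For the first-derivative identity, compute $\de\D_{z_0}/\de z_i$. The antiholomorphic term contributes nothing, so one obtains
\[
\frac{\de_{z_i}K_\W(z,\ov z)}{K_\W(z,\ov z)}-\frac{\de_{z_i}K_\W(z,\ov z_0)}{K_\W(z,\ov z_0)}.
\]
Here $\de_{z_i}K_\W(z,\ov z)$ means the derivative in the first (holomorphic) slot, evaluated on the diagonal. At $z=z_0$ both quotients coincide, so the difference is $0$. The $\ov z_j$-derivative follows either by conjugation, since $\D_{z_0}$ is real, or by the symmetric computation.

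The only point needing care, and the mild obstacle, is justifying the splitting into holomorphic and antiholomorphic pieces. This requires nonvanishing of $K_\W(z,\ov z_0)$ near $z_0$, which follows from continuity and positivity on the diagonal. One must also check that branch choices do not matter; this is immediate because only derivatives enter, and they can be computed directly as logarithmic derivatives $\de K/K$ without choosing a branch.
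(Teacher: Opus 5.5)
Your proposal is correct and matches the paper's proof: both split the diastasis into four logarithms and note that $\log K_\Omega(z,\bar z_0)$ and $\log K_\Omega(z_0,\bar z)$ are respectively holomorphic and antiholomorphic in $z$, so they drop out of the mixed second derivative. You also spell out two points the paper leaves implicit, namely why the first derivatives vanish at $z_0$ and why $K_\Omega(z,\bar z_0)\neq 0$ near $z_0$.
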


\begin{proof}
Expand the definition of $\D_{z_0}$ and differentiate directly. The mixed derivatives of
$\log K_\Omega(z,\bar z_0)$ and $\log K_\Omega(z_0,\bar z)$ vanish because these terms are,
respectively, holomorphic and antiholomorphic in $z$.
\end{proof}

\begin{lem}\label{det}
Fix $z_0\in\Omega$. Then, at every point $(z_0,\zeta)\in\Omega_{m,s}$,
\begin{equation}\label{detfull}
\det(g_{\alpha\bar\beta})(z_0,\zeta)
=
C_4K_\Omega(z_0,\bar z_0)^A
\bigl(A+s\,t\,p(t)\bigr)^n
p(t)^{m-1}\bigl(p(t)+t\,p'(t)\bigr),
\end{equation}
where
\[
A=ms+1,
\qquad
t=K_\Omega(z_0,\bar z_0)^s\|\zeta\|^2,
\qquad
p(t)=\frac{R'(t)}{R(t)}.
\]
\end{lem}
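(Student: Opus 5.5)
The strategy is to compute the Hessian of the potential \eqref{pot} at the point $(z_0,\zeta)$ after replacing $\log K_\W$ by the diastasis centered at $z_0$. We write
\[
\log K_\W(z,\ov z)=\D_{z_0}(z,\ov z)+\log K_\W(z,\ov z_0)+\log K_\W(z_0,\ov z)-\log K_\W(z_0,\ov z_0),
\]
and note that the last three terms form a pluriharmonic function $\phi(z)+\ov{\phi(z)}$ plus a constant. Then
\[
\psi_{\W_{m,s}}=A\,\D_{z_0}+\log R\!\left(e^{s\D_{z_0}}\,|e^{s\phi}|^2\,\|\zeta\|^2 K_\W(z_0,\ov z_0)^{s}\right)+\text{pluriharmonic}.
\]
Next we change the fiber coordinate to $\xi:=e^{s\phi(z)}K_\W(z_0,\ov z_0)^{s/2}\zeta$, up to the harmless normalization $\phi(z_0)$. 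This change is holomorphic and has triangular Jacobian. Its holomorphic Jacobian determinant $e^{ms\phi}K_\W(z_0,\ov z_0)^{ms/2}$ contributes $|\cdot|^2$, which at $z=z_0$ equals $K_\W(z_0,\ov z_0)^{ms}$ times a constant. In the new coordinates the potential becomes, modulo pluriharmonic terms, $A\,\D_{z_0}(z)+\log R(e^{s\D_{z_0}}\|\xi\|^2)$, and $t=e^{s\D_{z_0}}\|\xi\|^2$.

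At $z=z_0$, Lemma~\ref{lem:diastasis} gives $\D_{z_0}=0$, $\partial\D_{z_0}=\deb\D_{z_0}=0$, and $\partial\deb\D_{z_0}=(g_\W)(z_0)$. So every mixed base-fiber term that involves a first derivative of $\D_{z_0}$ vanishes, and the Hessian becomes block diagonal. The base block is $(A+s\,t\,p(t))\,(g_\W)_{i\ov j}(z_0)$. The factor $t\,p(t)$ comes from $\partial_i\deb_j$ of $\log R(t)$: we have $\partial_i\deb_j t=s\,t\,\partial_i\deb_j\D_{z_0}$ at $z_0$, and the terms with first derivatives of $t$ in $z$ vanish because they carry $\partial\D_{z_0}$. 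The fiber block is the Hessian of $\log R(\|\xi\|^2)$ in $\xi$, namely $p(t)\delta_{\mu\nu}+p'(t)\ov\xi_\mu\xi_\nu$. Its eigenvalues are $p$ with multiplicity $m-1$ and $p+t\,p'$, since $\|\xi\|^2=t$ at $z_0$. Hence the determinant in the $(z,\xi)$ coordinates is
\[
(A+stp)^n\det g_\W(z_0)\,p^{m-1}(p+tp').
\]

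Finally we return to the coordinates $(z,\zeta)$ by multiplying by $|\det \partial\xi/\partial\zeta|^2=K_\W(z_0,\ov z_0)^{ms}\cdot$const. Using \eqref{detbase}, $\det g_\W(z_0)=C_1K_\W(z_0,\ov z_0)$, so the total power of $K_\W(z_0,\ov z_0)$ is $ms+1=A$. This yields \eqref{detfull}.

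The main obstacle is the bookkeeping of the mixed terms, $\partial_i\deb_\nu$ of $\log R(t)$. One has to check that they vanish at $z_0$ and are not merely small. They are proportional to $\partial_i t=s\,t\,\partial_i\D_{z_0}+(\text{holomorphic part, removed by the }\xi\text{ change})$, and this vanishes at $z_0$ by Lemma~\ref{lem:diastasis}. That is exactly why we pass to the $\xi$ coordinate before differentiating.
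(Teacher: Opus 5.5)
Your proposal is correct and is essentially the paper's proof: a potential built from the diastasis centered at $z_0$, a holomorphic fiber rescaling, block-diagonality at $z_0$ from Lemma~\ref{lem:diastasis}, and a Jacobian factor $K_\Omega(z_0,\bar z_0)^{ms}$; your $\xi$ coincides with the paper's $\eta=h(z)\zeta$ once $\phi=\log K_\Omega(z,\bar z_0)-\log K_\Omega(z_0,\bar z_0)$, which is the normalization $\phi(z_0)=0$ that your factor $K_\Omega(z_0,\bar z_0)^{s}$ tacitly uses. State that normalization explicitly, and also fix a simply connected neighborhood of $z_0$ on which $K_\Omega(z,\bar z_0)\neq0$, so that $\phi$ is well defined.
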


\begin{proof}
Fix $z_0\in\Omega$ and let $\D_{z_0}$ be the diastasis centered at $z_0$. Choose a simply
connected neighborhood of $z_0$ on which $K_\Omega(z,\bar z_0)\neq0$, and set
\[
h(z):=\frac{K_\Omega(z,\bar z_0)^s}{K_\Omega(z_0,\bar z_0)^{s/2}}.
\]
Introduce the new fiber variable $\eta=h(z)\zeta$. Then
\[
e^{s\D_{z_0}(z,\bar z)}\|\eta\|^2=K_\Omega(z,\bar z)^s\|\zeta\|^2.
\]
Thus, in the coordinates $(z,\eta)$, the variable $t$ defined in \eqref{kR} is given by
\[
t=e^{s\D_{z_0}(z,\bar z)}\|\eta\|^2.
\]

Since
\[
\log K_\Omega(z,\bar z)
=
\D_{z_0}(z,\bar z)-\log K_0+\log K_\Omega(z,\bar z_0)+\log K_\Omega(z_0,\bar z),
\ 
K_0:=K_\Omega(z_0,\bar z_0),
\]
the potential \eqref{pot} differs from
\[
\widetilde\Phi(z,\eta):=
A\D_{z_0}(z,\bar z)+\log R\bigl(e^{s\D_{z_0}(z,\bar z)}\|\eta\|^2\bigr)
\]
only by a constant plus the real part of a holomorphic function. Hence both define the same
K\"ahler metric.

At the point $(z_0,\eta)$ one has $t=\|\eta\|^2$, the first derivatives of $t$ with respect to the
base variables vanish, and so the mixed base-fiber block is zero. For the horizontal block,
\[
(g_{\Omega_{m,s}})_{i\bar j}(z_0,\eta)
=
A\frac{\partial^2\D_{z_0}}{\partial z_i\partial\bar z_j}(z_0,\bar z_0)
+
p(t)\frac{\partial^2 t}{\partial z_i\partial\bar z_j}(z_0,\eta).
\]
Since
\[
\frac{\partial^2 t}{\partial z_i\partial\bar z_j}(z_0,\eta)
=
s\,t\,\frac{\partial^2\D_{z_0}}{\partial z_i\partial\bar z_j}(z_0,\bar z_0),
\]
Lemma~\ref{lem:diastasis} yields
\begin{equation}\label{hblock-rewrite}
(g_{\Omega_{m,s}})_{i\bar j}(z_0,\eta)
=
\bigl(A+s\,t\,p(t)\bigr)(g_\Omega)_{i\bar j}(z_0).
\end{equation}

For the vertical block one has
\[
\frac{\partial t}{\partial\eta_\mu}=\bar\eta_\mu,
\qquad
\frac{\partial t}{\partial\bar\eta_\nu}=\eta_\nu,
\qquad
\frac{\partial^2 t}{\partial\eta_\mu\partial\bar\eta_\nu}=\delta_{\mu\nu},
\]
so
\[
(g_{\Omega_{m,s}})_{\mu\bar\nu}(z_0,\eta)
=
p'(t)\bar\eta_\mu\eta_\nu+p(t)\delta_{\mu\nu}.
\]
Thus the vertical block is
\[
p(t)I_m+p'(t)\bar\eta^t\eta,
\]
and by a straightforward computation, after applying a unitary change of coordinates sending $\eta$ to $(\sqrt t,0,\dots,0)$, one obtains
\[
\det\bigl(pI_m+p'(t)\bar\eta^t\eta\bigr)
=
p(t)^{m-1}\bigl(p(t)+t\,p'(t)\bigr).
\]

Multiplying the determinants of the two diagonal blocks and using \eqref{detbase}, we obtain
\[
\det(g_{\alpha\bar\beta})(z_0,\eta)
=
C_1K_\Omega(z_0,\bar z_0)
\bigl(A+s\,t\,p(t)\bigr)^n
p(t)^{m-1}\bigl(p(t)+t\,p'(t)\bigr).
\]
Finally, the holomorphic change of variables $(z,\zeta)\mapsto(z,\eta)$ contributes the factor
$|h(z_0)|^{2m}=K_\Omega(z_0,\bar z_0)^{sm}$, and \eqref{detfull} follows.
\end{proof}

\subsection{The one-variable equation}

\begin{lem}\label{lem:radialpluri}
Let $B\subset \C^m$ be a ball centered at the origin, and let $u\in C^\infty(B,\R)$.
Assume that
$$
u(\eta)=\phi(\|\eta\|^2)
$$
for some smooth function $\phi$ on $[0,\rho)$, where $\rho>0$ is such that
$$
B=\{\eta\in\C^m:\ \|\eta\|^2<\rho\}.
$$
If $u$ is pluriharmonic, then $u$ is constant.
\end{lem}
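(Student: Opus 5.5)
We will show that $\phi$ is constant on $[0,\rho)$ by restricting $u$ to complex lines through the origin and reducing the question to a one-variable fact about radial harmonic functions. Pluriharmonicity is preserved under restriction to holomorphic curves, so for any unit vector $v\in\C^m$ the function
$$
w\mapsto u(wv)=\phi(|w|^2),\qquad |w|^2<\rho,
$$
is harmonic on the disc $\{w\in\C:\ |w|<\sqrt\rho\}$. It therefore suffices to treat the case $m=1$: a smooth radial harmonic function on a disc is constant.

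For the one-variable statement we write $\Delta=4\,\partial_w\partial_{\bar w}$ and compute, with $x=|w|^2$,
$$
\partial_w\partial_{\bar w}\,\phi(|w|^2)=\phi'(x)+x\,\phi''(x)=\bigl(x\,\phi'(x)\bigr)'.
$$
Harmonicity then gives $(x\phi'(x))'=0$ on $[0,\rho)$. Hence $x\phi'(x)$ is constant, and evaluating at $x=0$ shows this constant is $0$; here we use the smoothness of $\phi$ up to $0$, which guarantees that $\phi'(0)$ is finite. So $\phi'(x)=0$ for $x\in(0,\rho)$, and by continuity on all of $[0,\rho)$. Thus $\phi$ is constant, and so is $u$.

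An alternative route avoids differentiating at $0$ altogether. By the mean value property on circles $|w|=r$, we get $\phi(0)=u(0)=\frac1{2\pi}\int_0^{2\pi}\phi(r^2)\,d\theta=\phi(r^2)$ for every $r<\sqrt\rho$, so $\phi$ is constant. This version is cleaner, and we would use it in the write-up.

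The only point requiring care is the boundary behaviour at $x=0$, which excludes the non-constant radial harmonic function $\log|w|^2$. The hypothesis that $u$ is smooth on the whole ball, including the origin, rules this out. The mean value argument handles it automatically, since it only uses that $u$ is harmonic and defined at the center.
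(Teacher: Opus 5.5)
Your argument is correct, but it is organized differently from the paper's. The paper works directly in $\mathbb{C}^m$. It computes the complex Hessian $\partial^2u/\partial\eta_\mu\partial\bar\eta_\nu=\phi''(r)\eta_\nu\bar\eta_\mu+\phi'(r)\delta_{\mu\nu}$ and splits into cases. For $m>1$, the off-diagonal entries force $\phi''\equiv0$ and then the diagonal ones force $\phi'\equiv0$ on $(0,\rho)$. For $m=1$, it integrates $(r\phi')'=0$ and uses boundedness of $\phi'$ near $0$ to kill the constant, which is exactly your first (ODE) variant.

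You instead restrict $u$ to the complex lines $w\mapsto wv$, which removes the case distinction, and then conclude by the mean value property on circles. Your route gains uniformity in $m$ and needs weaker hypotheses. The mean value argument differentiates nothing, so the one-sided smoothness of $\phi$ at $0$ plays no role. Only harmonicity of $u$ at the center is used, and that is what excludes $\log|w|^2$.

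The paper's route, in exchange, is a self-contained computation with no potential theory. For $m\ge2$ it never touches the origin, so it would even prove the statement on the punctured ball $B\setminus\{0\}$. Your reduction to lines cannot capture this, since on each punctured disc $\log|w|^2$ is a nonconstant radial harmonic function.
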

\begin{proof}
Set $r=\|\eta\|^2$. Then $u(\eta)=\phi(r)$ and
$$
\frac{\partial^2u}{\partial\eta_\mu\partial\bar\eta_\nu}
=
\phi''(r)\eta_\nu\bar\eta_\mu+\phi'(r)\delta_{\mu\nu}.
$$
Since $u$ is pluriharmonic, these mixed derivatives vanish identically.

If $m>1$, choose $\mu\neq \nu$. Then
$$
\phi''(r)\eta_\nu\bar\eta_\mu=0
\qquad
\text{for all }\eta\in B.
$$
Fix $r\in(0,\rho)$. Since $m>1$, one can choose $\eta\in B$ with $\|\eta\|^2=r$ and
$\eta_\nu\bar\eta_\mu\neq 0$, so $\phi''(r)=0$. Hence $\phi''=0$ on $(0,\rho)$. Taking now
$\mu=\nu$, we get $\phi'(r)=0$ for all $r\in(0,\rho)$. Therefore $\phi$ is constant on
$(0,\rho)$, hence on $[0,\rho)$ by continuity.

If $m=1$, the pluriharmonicity condition reads
$$
r\phi''(r)+\phi'(r)=0
\qquad
\text{for } r\in(0,\rho),
$$
that is, $(r\phi'(r))'=0$ on $(0,\rho)$. Thus $r\phi'(r)=C$ for some constant $C$. Since
$\phi$ is smooth on $[0,\rho)$, the function $\phi'$ is bounded near $0$, and this forces
$C=0$. Hence $\phi'(r)=0$ on $(0,\rho)$, so $\phi$ is constant on $(0,\rho)$, and therefore
on $[0,\rho)$ by continuity.

Thus $u$ is constant.
\end{proof}

\begin{prop}\label{ode}
Assume that $g_{\W_{m,s}}$ is K\"ahler--Einstein. Then there exists a positive constant $C_6$
such that
\begin{equation}\label{odeq}
\bigl(A+s\,t\,p(t)\bigr)^n
p(t)^{m-1}\bigl(p(t)+t\,p'(t)\bigr)
=
C_6R(t).
\end{equation}
\end{prop}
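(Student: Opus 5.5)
We will combine the local Einstein identity \eqref{localein2} (valid since $\lmb=-1$ by Lemma~\ref{lmb}) with the determinant formula of Lemma~\ref{det} and the kernel formula \eqref{kR}, restricted to a single fiber. Fix $z_0\in\W$ and consider the fiber $\{z_0\}\times B$, where $B=\{\zeta\in\C^m:\ \|\zeta\|^2<K_\W(z_0,\ov z_0)^{-s}\}$ is a ball centered at the origin. On the fiber, \eqref{kR} gives
\[
K_{\W_{m,s}}(z_0,\zeta)=\frac{K_\W(z_0,\ov z_0)^A}{\pi^m}R(t),\qquad t=K_\W(z_0,\ov z_0)^s\|\zeta\|^2,
\]
while Lemma~\ref{det} gives $\det(g_{\alpha\ov\beta})(z_0,\zeta)=C_4K_\W(z_0,\ov z_0)^A\,Q(t)$, where $Q(t):=(A+stp(t))^np(t)^{m-1}(p(t)+tp'(t))$. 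Both expressions are computed in the global coordinates $(z,\zeta)$, so they can be compared directly.

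Next we take the quotient. By Lemma~\ref{ein} and \eqref{localein2}, on any simply connected neighborhood $U$ the function $\log\det(g_{\alpha\ov\beta})-\log K_{\W_{m,s}}$ equals $2\,\mathrm{Re} f$, hence is pluriharmonic. Pluriharmonicity is a local condition, so this function is pluriharmonic on all of $\W_{m,s}$; it is globally defined because both the determinant and the kernel are positive. Restricting it to the complex submanifold $\{z_0\}\times B$ keeps it pluriharmonic in $\zeta$. By the two displays above, this restriction equals
\[
u(\zeta):=\log\frac{C_4\pi^m Q(t)}{R(t)},\qquad t=K_\W(z_0,\ov z_0)^s\|\zeta\|^2,
\]
so $u(\zeta)=\phi(\|\zeta\|^2)$ with $\phi(r)=\log\bigl(C_4\pi^mQ(cr)/R(cr)\bigr)$ and $c=K_\W(z_0,\ov z_0)^s$. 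The function $\phi$ is smooth on $[0,c^{-1})$. Indeed, $R$ is real-analytic on $[0,1)$ and positive there, since it is proportional to the diagonal Bergman kernel; likewise $Q>0$ on $[0,1)$, since it is proportional to the determinant of a positive definite metric. Lemma~\ref{lem:radialpluri} then shows that $u$ is constant, so $Q(t)=C\,R(t)$ for all $t\in[0,1)$, because $cr$ ranges over $[0,1)$.

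Setting $C_6:=C$, which is positive as a ratio of positive quantities, yields \eqref{odeq} on $[0,1)$. Since both sides are rational functions of $t$, the identity holds wherever both sides are defined.

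The main point requiring care is the smoothness and positivity of $\phi$ up to $r=0$, which Lemma~\ref{lem:radialpluri} needs in the case $m=1$. This follows from the analyticity of $R$ at $t=0$ and from $p(0)>0$, which holds because $p(0)$ is the positive fiber eigenvalue of the metric on the zero section found in the proof of Lemma~\ref{lmb}. A second point is that the constant must not depend on $z_0$. This is not actually needed, since the equation at a single $z_0$ already determines the identity for all $t\in[0,1)$. It is nonetheless consistent: the factor $K_\W(z_0,\ov z_0)^A$ cancels in the quotient, so the constant is independent of $z_0$.
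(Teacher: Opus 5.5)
Your proof is correct and follows essentially the same route as the paper. You compare Lemma~\ref{det} with the Einstein identity on a single fiber $\{z_0\}\times B$, observe that the quotient depends only on $\|\zeta\|^2$ and is pluriharmonic, and apply Lemma~\ref{lem:radialpluri}. Your version also makes three points that the paper leaves implicit, all of which are sound. First, you use the globally defined pluriharmonic function $\log\det(g_{\alpha\ov\beta})-\log K_{\W_{m,s}}$ rather than a local holomorphic $f$, which avoids any concern about whether the whole fiber lies in one simply connected chart. Second, you check that $\phi$ is smooth up to $r=0$. Third, you note that the identity extends to an identity of rational functions of $t$, which is exactly what the pole analysis in Theorem~\ref{coll} needs.
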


\begin{proof}
Fix $z_0\in\W$. By Lemma~\ref{det},
\[
\det(g_{\alpha\ov\beta})(z_0,\zeta)
=
C_4K_\W(z_0,\ov z_0)^A
\bigl(A+s\,t\,p(t)\bigr)^n
p(t)^{m-1}\bigl(p(t)+t\,p'(t)\bigr),
\]
where $t=K_\W(z_0,\ov z_0)^s\|\zeta\|^2$. On the other hand, by \eqref{localein2} and
\eqref{kR},
\[
\det(g_{\alpha\ov\beta})(z_0,\zeta)
=
e^{f(z_0,\zeta)+\ov f(z_0,\zeta)}
\frac{K_\W(z_0,\ov z_0)^A}{\pi^m}R(t).
\]

For fixed $z_0\in\W$, the slice
\[
\{\zeta\in\C^m:\ (z_0,\zeta)\in \W_{m,s}\}
=
\{\zeta\in\C^m:\ K_\W(z_0,\ov z_0)^s\|\zeta\|^2<1\}
\]
is a ball centered at the origin. Along this slice, the two expressions for $\det(g_{\alpha\ov\beta})(z_0,\zeta)$ obtained in the preceding two displayed formulas depend on $\zeta$ only through
$t=K_\W(z_0,\ov z_0)^s\|\zeta\|^2$. Therefore $f(z_0,\zeta)+\ov f(z_0,\zeta)$ depends only on
$\|\zeta\|^2$. Since it is also pluriharmonic, and since \(K_\W(z_0,\ov z_0)>0\), the slice is the Euclidean ball
$
\{\zeta\in\C^m:\ \|\zeta\|^2<K_\W(z_0,\ov z_0)^{-s}\},
$
centered at the origin. Hence Lemma~\ref{lem:radialpluri} applies with
\(\rho=K_\W(z_0,\ov z_0)^{-s}\), and shows that
\(f(z_0,\zeta)+\ov f(z_0,\zeta)\) is constant along the slice.
Hence there exists a positive constant $C_6(z_0)$ such that
\[
\bigl(A+s\,t\,p(t)\bigr)^n p(t)^{m-1}\bigl(p(t)+t\,p'(t)\bigr)=C_6(z_0)R(t).
\]

Evaluating at $t=0$, we obtain
\[
A^n p(0)^m=C_6(z_0)R(0).
\]
Since $A$, $p(0)$ and $R(0)$ are independent of $z_0$, it follows that $C_6(z_0)$ is actually
independent of $z_0$. This proves \eqref{odeq}.
\end{proof}

\subsection{Collapse of the radial factor}

\begin{thm}\label{coll}
Assume that $g_{\W_{m,s}}$ is K\"ahler--Einstein and that $s\neq 0$. Then there exists
$c>0$ such that, with $R(t)$ as defined in \eqref{Rdef},
\[
R(t)=c(1-t)^{-(n+m+1)}.
\]
\end{thm}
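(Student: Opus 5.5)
The plan is to treat the identity \eqref{odeq} of Proposition~\ref{ode} as an identity between rational functions of a complex variable $t$, and to compare the singularities of its two sides. By \eqref{Rdef}, $R$ is rational with poles only at $t=1$, so $p=R'/R$ is rational. Both sides of \eqref{odeq} are therefore rational functions that agree on the interval $[0,1)$. By the identity principle they agree on all of $\C$ minus finitely many points.

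It is convenient to substitute $u:=(1-t)^{-1}$. Then
\[
R(t)=u^{m+1}Q(u),\qquad Q(u):=\sum_{j=0}^n A_j u^j,
\]
where $Q$ is a polynomial of degree $d:=\max\{j: A_j\neq0\}\le n$. Note that $R(0)=\sum_j A_j=\pi^m K_{\W_{m,s}}(z,0)K_\W(z,\ov z)^{-A}>0$ by \eqref{kzero}, so $R\not\equiv0$ and $d$ is well defined.

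\textbf{Step 1: pole order at $t=1$.} Near $t=1$ we have $R\sim A_d(1-t)^{-(d+m+1)}$, hence
\[
p(t)\sim \frac{d+m+1}{1-t},\qquad p+tp'\sim\frac{d+m+1}{(1-t)^2},\qquad A+stp\sim\frac{s(d+m+1)}{1-t}.
\]
Here the hypothesis $s\neq0$ is used: it makes the leading coefficient of $A+stp$ nonzero. The left-hand side of \eqref{odeq} therefore has a pole of exact order $n+(m-1)+2=n+m+1$ at $t=1$. The right-hand side has a pole of order $d+m+1$, so $d=n$ and $A_n\neq0$.

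\textbf{Step 2: no zeros of $R$ away from $t=1$.} This is the main point. Suppose $R(t_0)=0$ for some $t_0\in\C\setminus\{1\}$, with multiplicity $k\ge1$. Then $t_0\neq0$, because $R(0)>0$. Near $t_0$, the function $p$ has a simple pole with residue $k$. It follows that
\begin{itemize}
\item $A+stp$ has a simple pole with leading coefficient $st_0k\neq0$, using $s\neq0$ and $t_0\neq0$;
\item $p^{m-1}$ has a pole of order $m-1$;
\item $p+tp'$ has a double pole with leading coefficient $-t_0k\neq0$.
\end{itemize}
The leading coefficients multiply, so there is no cancellation, and the left-hand side of \eqref{odeq} has a pole at $t_0$. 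The right-hand side $C_6R$ vanishes at $t_0$, which is a contradiction. Hence $R$ has no zeros in $\C\setminus\{1\}$.

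In terms of $Q$: every root $u_0\neq0$ of $Q$ corresponds to the zero $t_0=1-1/u_0\neq1$ of $R$. So $Q$ has no nonzero roots, and $Q(u)=A_nu^n$. Consequently
\[
R(t)=A_n(1-t)^{-(n+m+1)}.
\]

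\textbf{Step 3: positivity.} Finally, $c:=A_n=R(0)>0$, which gives the claim. The only delicate points I expect are justifying the passage to complex $t$, which follows from rationality and the identity principle, and checking that no factor's leading coefficient vanishes in Steps 1 and 2. Both checks rely exactly on $s\neq0$ and $R(0)>0$.
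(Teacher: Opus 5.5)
Your proof is correct and follows essentially the paper's argument: the core step is the same contradiction. A zero of $R$ at some $t_0\neq0,1$ (equivalently, a nonzero root of $P$) would give the left-hand side of \eqref{odeq} a pole of order $n+m+1$ there, while $C_6R$ is holomorphic; you carry this out in $t$, the paper in $y=(1-t)^{-1}$. The only difference is how you get $A_n\neq0$: you read it off the pole order at $t=1$ in the Einstein identity, whereas the paper uses $\deg F_\W(sx)=n$ for $s\neq0$, and both routes are valid.
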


\begin{proof}
By Proposition~\ref{ode}, there exists $C_6>0$ such that
\begin{equation}\label{odeq-coll}
\bigl(A+s\,t\,p(t)\bigr)^n
p(t)^{m-1}\bigl(p(t)+t\,p'(t)\bigr)
=
C_6R(t).
\end{equation}
Set
\[
y:=\frac{1}{1-t}.
\]
Since
\[
R(t)=\sum_{j=0}^n\frac{A_j}{(1-t)^{j+m+1}},
\]
we may write
\begin{equation}\label{Ry-coll}
R(t)=y^{m+1}P(y),
\qquad
P(y):=\sum_{j=0}^nA_jy^j.
\end{equation}
Because $s\neq0$, the polynomial $F_\W(sx)$ appearing in the expansion \eqref{poch}
 has degree $n$, hence $c_n(s)\neq0$ and therefore
$A_n\neq0$. Thus $P$ has degree exactly $n$.

We claim that $P$ has no zero in $\C\setminus\{0\}$. Assume by contradiction that
$P(\alpha)=0$ for some $\alpha\in\C\setminus\{0\}$. Let $q\ge1$ be the multiplicity of
$\alpha$. Then
\[
P(y)=(y-\alpha)^qQ(y),
\qquad
Q(\alpha)\neq0.
\]
Hence
\[
\frac{P'(y)}{P(y)}=\frac{q}{y-\alpha}+\frac{Q'(y)}{Q(y)}.
\]
Now
\[
p(t)=\frac{d}{dt}\log R(t)
=
\frac{dy}{dt}\,\frac{d}{dy}\bigl((m+1)\log y+\log P(y)\bigr),
\]
and since $dy/dt=y^2$, we obtain
\[
p(t)=(m+1)y+y^2\frac{P'(y)}{P(y)}.
\]
Using $y^2=\alpha^2+(y-\alpha)(y+\alpha)$, we rewrite this as
\begin{equation}\label{ppole-coll}
p(t)=\frac{q\alpha^2}{y-\alpha}+G(y),
\end{equation}
where $G(y)$ is holomorphic near $\alpha$. Thus $p(t)$ has a pole of order $1$ at $y=\alpha$.

Differentiating with respect to $y$ and using again $dy/dt=y^2$, we get
\begin{equation}\label{pprime-coll}
p'(t)
=
-\frac{q\alpha^4}{(y-\alpha)^2}+\frac{L(y)}{y-\alpha},
\end{equation}
with $L(y)$ holomorphic near $\alpha$.

Since $t=1-\frac1y$ and $\alpha\neq0$, the function $t(y)$ is holomorphic at $y=\alpha$.
Moreover, since the Bergman kernel is positive on the diagonal, \eqref{kR} gives $R(0)>0$; hence
\[
R(0)=P(1)>0, 
\]
so $P(1)\neq0$, hence $\alpha\neq1$. Therefore
\[
t(\alpha)=1-\frac1\alpha\neq0.
\]
Write
\[
t(y)=t(\alpha)+(y-\alpha)M(y)
\]
with $M$ holomorphic near $\alpha$. Using \eqref{ppole-coll} and \eqref{pprime-coll}, we obtain
\[
p(t)+t\,p'(t)
=
-\frac{q\alpha^4t(\alpha)}{(y-\alpha)^2}+\frac{H(y)}{y-\alpha},
\]
where $H(y)$ is holomorphic near $\alpha$. Since $q\ge1$, $\alpha\neq0$, and $t(\alpha)\neq0$,
the coefficient of $(y-\alpha)^{-2}$ is nonzero. Hence $p(t)+t\,p'(t)$ has a pole of order $2$
at $y=\alpha$.

Also, $p(t)^{m-1}$ has a pole of order $m-1$ at $y=\alpha$, and since $s\neq0$, the factor
$A+s\,t\,p(t)$ has a pole of order $1$, so its $n$-th power has a pole of order $n$. Therefore
the left-hand side of \eqref{odeq-coll} has a pole of order $n+m+1$ at $y=\alpha$.

On the other hand, by \eqref{Ry-coll},
\[
R(t)=y^{m+1}P(y),
\]
and since $\alpha\neq0$, the factor $y^{m+1}$ is holomorphic and nonzero at $y=\alpha$.
Hence $R(t)$ is holomorphic at $y=\alpha$, a contradiction. Therefore $P$ has no zero in
$\C\setminus\{0\}$.

Since $P$ has degree $n$ and all its zeros are equal to $0$, it follows that
\[
P(y)=c\,y^n
\]
for some $c\neq0$. Since $R(0)=P(1)>0$, one has $c>0$. Finally,
\[
R(t)=y^{m+1}P(y)=c\,y^{n+m+1}=c(1-t)^{-(n+m+1)}.
\]
\end{proof}

\subsection{Determination of the parameter and the base}


\begin{lem}\label{poly-sval}
Assume that \(R(t)=c(1-t)^{-(n+m+1)}\) for some \(c>0\). Then
\begin{equation}\label{topterm}
F_\W(sx)=\frac{1}{n!}(x+1)_n,
\end{equation}
and, in particular, \(s=\frac{1}{n+1}\).
\end{lem}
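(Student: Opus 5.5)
From $R(t)=\sum_{j=0}^n A_j(1-t)^{-(j+m+1)}$ and the hypothesis $R(t)=c(1-t)^{-(n+m+1)}$, I will first compare coefficients. The functions $(1-t)^{-(j+m+1)}$, $0\le j\le n$, are linearly independent; equivalently, the polynomial $P(y)=\sum_j A_j y^j$ of Theorem~\ref{coll} equals $c\,y^n$. This gives $A_j=0$ for $j<n$ and $A_n=c$. Since $A_j=c_j(s)(j+m)!$, it follows that $c_j(s)=0$ for $j<n$, and \eqref{poch} reduces to
\[
F_\W(sx)=c_n(s)\,(x+1)_n .
\]

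Next I will determine $c_n(s)$ by evaluating at $x=0$. Since $F_\W(0)=\prod(1+0)=1$ and $(1)_n=n!$, we get $c_n(s)=1/n!$, which is \eqref{topterm}.

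To extract $s$, I will compare zeros on both sides. The right-hand side $\frac1{n!}(x+1)_n$ has simple zeros at $x=-1,-2,\dots,-n$. The left-hand side $F_\W(sx)=\prod_{k,i}\bigl(1+\tfrac{sx}{a_{k,i}}\bigr)$ has zeros at $x=-a_{k,i}/s$, counted over all $n$ index pairs, using \eqref{deg}. Since both are degree-$n$ polynomials with the same constant term, their multisets of zeros coincide. Hence $\{a_{k,i}\}=\{s,2s,\dots,ns\}$ as multisets; in particular all $a_{k,i}$ are distinct, $s>0$, and the $a_{k,i}$ form an arithmetic progression with step $s$ starting at $s$.

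Comparing sums is probably the quickest route to the value of $s$. Note that $a_{k,i}<1$, since $i+q_k/2\le 1+p_k+b_k+q_k/2<2+p_k+q_k+b_k$. I will inspect the formula $a_{k,i}=\frac{i+q_k/2}{2+p_k+q_k+b_k}$: for fixed $k$, the $1+p_k+b_k$ values $a_{k,i}$ already form an arithmetic progression with step $d_k=1/(2+p_k+q_k+b_k)$. Because all values in $\{s,\dots,ns\}$ are distinct and the overall set is a single progression with step $s$, each block's step satisfies $d_k=s\cdot(\text{positive integer})$ when the block has at least two terms.

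The main obstacle is excluding $r\ge2$ and pinning down $s$. I would try the following argument. The largest value is $ns=\max_{k} a_{k,1+p_k+b_k}$, and the smallest is $s=\min a_{k,1}$. Take the block $k$ containing the smallest element, so that $s=(1+q_k/2)d_k$. If that block has $N_k=1+p_k+b_k$ elements, they are $s, s+d_k,\dots$, so $d_k\ge s$ forces $1+q_k/2\le1$, i.e.\ $q_k=0$ and $d_k=s$ whenever $N_k\ge2$. In that case $s=1/(2+p_k+b_k)=1/(N_k+1)$. If $N_k=n$ we obtain $s=1/(n+1)$ directly. If instead $r\ge2$, the remaining blocks must fill the leftover values in $\{s,\dots,ns\}$ above $N_k s$; each such value is below $1$ but must be at least $(N_k+1)s=1$, which is a contradiction. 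The degenerate case $N_k=1$ can be handled by the same inequality $a<1$ applied to the next block. Hence $r=1$, and $s=\frac1{n+1}$. This also sets up the rank-one conclusion needed later.
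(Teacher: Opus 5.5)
Your proposal is correct. The first half is exactly what the paper does: linear independence of the $(1-t)^{-(j+m+1)}$, normalization via $F_\W(0)=1$, and comparison of zeros to get $\{a_{k,i}\}=\{s,2s,\dots,ns\}$.

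Where you diverge is in extracting $s$. You say that comparing sums is the quickest route but never carry it out, and that is precisely the paper's argument. For each block, $\sum_{i=1}^{m_k}\bigl(i+\tfrac{q_k}{2}\bigr)=\tfrac{m_k}{2}(m_k+1+q_k)$, so $\sum_i a_{k,i}=\tfrac{m_k}{2}$ and the total is $\tfrac n2$. Equating this with $s\,\tfrac{n(n+1)}{2}$ gives $s=\frac1{n+1}$ in one line, with no case analysis. The rank-one conclusion is then proved separately in Lemma~\ref{rank}.

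Your route instead uses two facts: each block is an arithmetic progression with step $d_k\in s\,\mathbb Z_{>0}$, and every $a_{k,i}<1$. This is valid. Its advantage is that it delivers $r=1$, $q_k=0$, $m_k=n$ at the same time, so it absorbs the content of Lemma~\ref{rank}. The cost is a case split.

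The one place your write-up is too vague is the case $N_k=1$. ``The same inequality applied to the next block'' does not by itself give a contradiction. You need to note that $N_k=1$ forces $p_k=b_k=0$, hence $a_{k,1}=\frac{1+q_k/2}{2+q_k}=\frac12$ and $s=\frac12$. Then the value $2s=1$ cannot be attained by any $a_{k',i}<1$, so $n=1$ and again $s=\frac1{n+1}$.

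More conceptually, $a_{k,i}+a_{k,m_k+1-i}=1$, so every block is symmetric about $\frac12$. This explains both your $N_k=1$ value and why the paper's sum trick works.
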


\begin{proof}
By definition,
\[
R(t)=\sum_{j=0}^n\frac{A_j}{(1-t)^{j+m+1}},
\qquad A_j=c_j(s)(j+m)!.
\]
Since the functions \(\{(1-t)^{-(j+m+1)}\}_{j=0}^n\) are linearly independent, the identity
\(R(t)=c(1-t)^{-(n+m+1)}\) implies that \(A_0=\cdots=A_{n-1}=0\). Hence
\(c_0(s)=\cdots=c_{n-1}(s)=0\), and by \eqref{poch} one gets
\(F_\W(sx)=c_n(s)(x+1)_n\). Evaluating at \(x=0\) and using \(F_\W(0)=1\), we find
\(c_n(s)=1/n!\), which proves \eqref{topterm}.

Now \(\frac{1}{n!}(x+1)_n=\prod_{j=1}^n\left(1+\frac{x}{j}\right)\), while by definition
\[
F_\W(sx)=\prod_{k=1}^r\prod_{i=1}^{1+p_k+b_k}\left(1+\frac{s\,x}{a_{k,i}}\right).
\]
Comparing zeros with multiplicity, we obtain
\begin{equation}\label{multiset2}
\{a_{k,i}\}=\{s,2s,\dots,ns\}
\qquad\text{with repetitions counted on both sides}.
\end{equation}

For each \(k\), write \(m_k:=1+p_k+b_k\) and
\(D_k:=2+p_k+q_k+b_k=m_k+1+q_k\). Then
\(a_{k,i}=\frac{i+\frac{q_k}{2}}{D_k}\) for \(i=1,\dots,m_k\), so
\[
\sum_{i=1}^{m_k}a_{k,i}
=
\frac{1}{D_k}\sum_{i=1}^{m_k}\left(i+\frac{q_k}{2}\right)
=
\frac{m_k}{2}.
\]
Summing over \(k\) and using \eqref{deg}, we get
\[
\sum_{k=1}^r\sum_{i=1}^{m_k}a_{k,i}
=
\frac12\sum_{k=1}^r m_k
=
\frac n2.
\]
On the other hand, \eqref{multiset2} gives
\[
\sum_{k=1}^r\sum_{i=1}^{m_k}a_{k,i}
=
\sum_{j=1}^n js
=
s\frac{n(n+1)}{2}.
\]
Comparing the two identities yields \(s=\frac{1}{n+1}\).
\end{proof}

\begin{lem}\label{rank}
Assume that
\[
\{a_{k,i}\}
=
\left\{\frac{1}{n+1},\frac{2}{n+1},\dots,\frac{n}{n+1}\right\}
\]
where repetitions are counted, that is, each number occurs with the same number of occurrences on the two sides. Then $\W$ has rank one. In particular,
\[
\W\cong\B^n.
\]
\end{lem}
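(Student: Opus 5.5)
The plan is to exploit the explicit form of the numbers $a_{k,i}$ block by block, and to locate the block that contains the smallest element $\frac1{n+1}$ of the right-hand multiset. As in the proof of Lemma~\ref{poly-sval}, set $m_k:=1+p_k+b_k$ and $D_k:=m_k+1+q_k$, so that
\[
a_{k,i}=\frac{i+\frac{q_k}{2}}{D_k},\qquad 1\le i\le m_k,
\qquad
\sum_{k=1}^r m_k=n
\]
by \eqref{deg}. By hypothesis the value $\frac{1}{n+1}$ occurs among the $a_{k,i}$. Each block $k$ is increasing in $i$, so its minimum is $a_{k,1}$. Hence there is an index $k$ with $a_{k,i}=\frac1{n+1}$ for some $i$, and then also $a_{k,1}\le\frac1{n+1}$. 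Since $\frac1{n+1}$ is the minimum of the whole multiset and $a_{k,1}$ belongs to it, we get exactly $a_{k,1}=\frac1{n+1}$.

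Next I will turn this into an integer equation. The identity $a_{k,1}=\frac1{n+1}$ reads
\[
(n+1)\Bigl(1+\tfrac{q_k}{2}\Bigr)=m_k+1+q_k,
\qquad\text{i.e.}\qquad
(n+1)(2+q_k)=2(m_k+1+q_k).
\]
Using $m_k\le n$, the right-hand side is at most $2(n+1)+2q_k$. This gives $(n+1)q_k\le 2q_k$, that is, $(n-1)q_k\le0$.

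I will then split into two cases.
\begin{itemize}
\item If $n=1$, then $\sum_k m_k=1$ with each $m_k\ge1$, which forces $r=1$ directly.
\item If $n\ge2$, then $q_k=0$, and the equation becomes $2(n+1)=2(m_k+1)$. Hence $m_k=n$, so this single block already exhausts $\sum_k m_k=n$. Since every $m_k\ge1$, there are no other blocks, and again $r=1$.
\end{itemize}
In both cases $\W$ has rank one. As a consistency check, with $q_1=0$ and $m_1=n$ one indeed gets $a_{1,i}=\frac{i}{n+1}$.

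Finally, I will invoke the structure theory underlying the notation of \cite{IshiParkYamamori2017BergmanKernel}. A bounded homogeneous domain of rank one is a Siegel domain whose cone is one-dimensional, namely a half-line. Such a Siegel domain is the Siegel upper half-space, which is biholomorphic to $\B^n$; this is the classical Piatetski-Shapiro / Kaneyuki description \cite{Kaneyuki1971BOOKHomogeneousBoundedDomains}.

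The combinatorial part is elementary. The main obstacle is this last identification: making precise that the rank $r$ in Ishi--Park--Yamamori's parametrization coincides with the rank of the normal $j$-algebra, or equivalently with the dimension of the cone of the Siegel realization. I would first try to quote this directly from \cite[Proposition~2.2]{IshiParkYamamori2017BergmanKernel}, and then cite the classical fact that rank-one homogeneous Siegel domains are balls.
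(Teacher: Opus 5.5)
Your proposal is correct and follows essentially the paper's own argument. Like the paper, you locate a block with $a_{k_0,1}=\frac{1}{n+1}$, turn this into $m_{k_0}=n+\frac{n-1}{2}q_{k_0}$ (your inequality $(n-1)q_k\le 0$ is the same computation), conclude $r=1$ separately for $n\ge2$ and $n=1$, and then invoke the structure theory of homogeneous Siegel domains. For that last step the paper cites Pyateskii-Shapiro (a rank-one $j$-algebra is elementary and its domain is the ball) together with Seo's Theorem~2.1(1); this is the same classical fact you reach through the half-line cone and the Siegel upper half-space.
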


\begin{proof}
For each fixed $k$, let $m_k:=1+p_k+b_k$. Then
\[
a_{k,i}=\frac{i+\frac{q_k}{2}}{m_k+1+q_k},
\qquad i=1,\dots,m_k,
\]
so
\[
a_{k,i+1}-a_{k,i}=\frac{1}{m_k+1+q_k}>0.
\]
Thus $a_{k,1},\dots,a_{k,m_k}$ is strictly increasing in $i$, and the minimum of the $k$-th
block is $a_{k,1}$.

Since the total collection of numbers, counted with repetitions, is
\[
\left\{\frac{1}{n+1},\frac{2}{n+1},\dots,\frac{n}{n+1}\right\},
\]
its minimum is $1/(n+1)$. Hence there exists $k_0$ such that
\[
a_{k_0,1}=\frac{1}{n+1}.
\]
Therefore
\[
\frac{1+\frac{q_{k_0}}{2}}{m_{k_0}+1+q_{k_0}}=\frac{1}{n+1},
\]
and hence
\begin{equation}\label{mkformula}
m_{k_0}=n+\frac{n-1}{2}q_{k_0}.
\end{equation}

If $n\ge2$, then $(n-1)/2>0$, so \eqref{mkformula} and $\sum_{k=1}^r m_k=n$ imply
\[
m_{k_0}=n,
\qquad q_{k_0}=0.
\]
Hence there is no block other than $k_0$, so $r=1$. If $n=1$, then
\[
\sum_{k=1}^r m_k=1,
\]
and since each $m_k\ge1$, again $r=1$. Thus $\W$ has rank one. By the standard realization of bounded homogeneous domains as homogeneous Siegel
domains; see, for instance, \cite[pp.~220--221]{PyateskiiShapiro1969BOOKAutomorphicFunctions} and
\cite[Theorem~2.1(1)]{Seo2018BiholomorphismsHartogsSiegel}, \(\W\) is biholomorphically
equivalent to the domain associated with a \(j\)-algebra of rank one. Since any \(j\)-algebra of
rank one is elementary, and the domain corresponding to an elementary \(j\)-algebra is the unit
ball \cite[p.~53]{PyateskiiShapiro1969BOOKAutomorphicFunctions}, it follows that
$$
\W\cong \B^n.
$$

\end{proof}

\subsection{Conclusion of the proof}

\begin{proof}[Proof of Theorem~\ref{main}]
We prove the equivalence of the four conditions. First assume that the Bergman metric
\(g_{\W_{m,s}}\) of \(\W_{m,s}\) is K\"ahler--Einstein.
By Theorem~\ref{coll},
\[
R(t)=c(1-t)^{-(n+m+1)}
\]
for some \(c>0\). By Lemma~\ref{poly-sval}, we obtain
\[
s=\frac{1}{n+1}
\quad\text{and}\quad
F_\W(sx)=\frac{1}{n!}(x+1)_n.
\]
By the factor comparison in the proof of Lemma~\ref{poly-sval}, we obtain
\[
\{a_{k,i}\}
=
\left\{\frac{1}{n+1},\frac{2}{n+1},\dots,\frac{n}{n+1}\right\}
\]
with repetitions counted. By Lemma~\ref{rank}, we have \(\W\cong\B^n\).
This proves that condition \hyperref[main:ke]{{\rm(1)}} implies condition
\hyperref[main:base]{{\rm(4)}}.

We now prove that condition \hyperref[main:base]{{\rm(4)}} implies condition
\hyperref[main:ball]{{\rm(3)}}. Assume that
\(\W\cong\B^n\) and \(s=\frac{1}{n+1}\).
Choose a biholomorphism
\(\Phi:\W\to\B^n\). Since \(\det\Phi'(z)\neq 0\) on \(\W\) and \(\W\) is simply connected,
\(\det\Phi'\) admits a holomorphic logarithm on \(\W\). Therefore there exists a holomorphic
function \(h\) on \(\W\) such that \(|h(z)|^2=|\det\Phi'(z)|^{2s}\) for all \(z\in\W\). Define
\[
\widetilde\Phi:\W\times\C^m\longrightarrow \B^n\times\C^m,
\qquad
\widetilde\Phi(z,\zeta):=(\Phi(z),h(z)\zeta).
\]
Then \(\widetilde\Phi\) is biholomorphic.

If \(w=\Phi(z)\) and \(\eta=h(z)\zeta\), then the transformation law for the Bergman kernel gives
\(K_\W(z,\ov z)=K_{\B^n}(w,\ov w)\,|\det\Phi'(z)|^2\). Hence
\[
\|\eta\|^2=|h(z)|^2\|\zeta\|^2<|h(z)|^2K_\W(z,\ov z)^{-s}=K_{\B^n}(w,\ov w)^{-s}.
\]
Therefore \(\widetilde\Phi\) identifies \(\W_{m,s}\) bijectively with
\[
\left\{(w,\eta)\in\B^n\times\C^m:\ \|\eta\|^2<K_{\B^n}(w,\ov w)^{-s}\right\}.
\]
Indeed, the same computation applied to the inverse biholomorphism $\Phi^{-1}$ gives the reverse inclusion, and $\widetilde\Phi^{-1}(w,\eta)=(\Phi^{-1}(w),h(\Phi^{-1}(w))^{-1}\eta)$.

Since \(K_{\B^n}(w,\ov w)=\frac{n!}{\pi^n}(1-\|w\|^2)^{-(n+1)}\) and
\(s=\frac{1}{n+1}\), the defining inequality becomes
\[
\|\eta\|^2<\left(\frac{n!}{\pi^n}\right)^{-1/(n+1)}(1-\|w\|^2).
\]
If we set \(C:=\left(\frac{n!}{\pi^n}\right)^{-1/(n+1)}\), this is equivalent to
\[
\|w\|^2+\frac{1}{C}\|\eta\|^2<1.
\]
After the linear change of variable \(u=C^{-1/2}\eta\), we obtain
\[
\|w\|^2+\|u\|^2<1.
\]
Hence the image of \(\W_{m,s}\) is exactly \(\B^{n+m}\), and therefore
\[
\W_{m,s}\cong\B^{n+m}.
\]
Thus condition \hyperref[main:base]{{\rm(4)}} implies condition
\hyperref[main:ball]{{\rm(3)}}.

Condition \hyperref[main:ball]{{\rm(3)}} immediately implies condition
\hyperref[main:hom]{{\rm(2)}}, since the unit ball is homogeneous. Finally, condition
\hyperref[main:hom]{{\rm(2)}} implies condition \hyperref[main:ke]{{\rm(1)}} by the same
argument used in the proof of Lemma~\ref{baseke}: the Bergman kernel and the determinant of the
Bergman metric have the same transformation law under biholomorphisms, so the ratio
$
\frac{\det(g_{\alpha\ov\beta})}{K_{\W_{m,s}}}
$
is invariant under \(\Aut(\W_{m,s})\). Since \(\W_{m,s}\) is homogeneous, this ratio is constant.
Thus
\[
\Ric(g_{\W_{m,s}})
=
-\frac{i}{2}\de\deb\log\det(g_{\alpha\ov\beta})
=
-\frac{i}{2}\de\deb\log K_{\W_{m,s}}
=
-\omega_{\W_{m,s}},
\]
and the Bergman metric of \(\W_{m,s}\) is K\"ahler--Einstein. This proves condition
\hyperref[main:ke]{{\rm(1)}} and completes the proof.
\end{proof}

\bibliographystyle{IEEEtranSA_noonline_comma_after_quotes}

\bibliography{Biblio}

@article {AhnByunPark2012AutomorphismsHartogsDomain,
    AUTHOR = {Ahn, Heungju and Byun, Jisoo and Park, Jong-Do},
     TITLE = {Automorphisms of the {H}artogs type domains over classical
              symmetric domains},
   JOURNAL = {Internat. J. Math.},
  FJOURNAL = {International Journal of Mathematics},
    VOLUME = {23},
      YEAR = {2012},
    NUMBER = {9},
     PAGES = {1250098, 11},
      ISSN = {0129-167X,1793-6519},
   MRCLASS = {32A07 (32M05 32M15)},
  MRNUMBER = {2959444},
MRREVIEWER = {\L ukasz\ Kosi\'{n}ski},
       DOI = {10.1142/S0129167X1250098X},
       URL = {https://doi.org/10.1142/S0129167X1250098X},
}

@article {Calabi1953IsometricImbedding,
    AUTHOR = {Calabi, Eugenio},
     TITLE = {Isometric imbedding of complex manifolds},
   JOURNAL = {Ann. of Math. (2)},
  FJOURNAL = {Annals of Mathematics. Second Series},
    VOLUME = {58},
      YEAR = {1953},
     PAGES = {1--23},
      ISSN = {0003-486X},
   MRCLASS = {53.0X},
  MRNUMBER = {57000},
MRREVIEWER = {K.\ Yano},
       DOI = {10.2307/1969817},
       URL = {https://doi.org/10.2307/1969817},
}

@article {FuWong2005StrictlyPseudoconvexDomains,
    AUTHOR = {Fu, Siqi and Wong, Bun},
     TITLE = {On strictly pseudoconvex domains with {K}\"{a}hler-{E}instein
              {B}ergman metrics},
   JOURNAL = {Math. Res. Lett.},
  FJOURNAL = {Mathematical Research Letters},
    VOLUME = {4},
      YEAR = {1997},
    NUMBER = {5},
     PAGES = {697--703},
      ISSN = {1073-2780},
   MRCLASS = {32F15 (32H15 32L07)},
  MRNUMBER = {1484700},
MRREVIEWER = {Shanyu\ Ji},
       DOI = {10.4310/MRL.1997.v4.n5.a7},
       URL = {https://doi.org/10.4310/MRL.1997.v4.n5.a7},
}

@article {HuangXiao2021BergmanEinstein,
    AUTHOR = {Huang, Xiaojun and Xiao, Ming},
     TITLE = {Bergman-{E}instein metrics, a generalization of {K}erner's
              theorem and {S}tein spaces with spherical boundaries},
   JOURNAL = {J. Reine Angew. Math.},
  FJOURNAL = {Journal f\"ur die Reine und Angewandte Mathematik. [Crelle's
              Journal]},
    VOLUME = {770},
      YEAR = {2021},
     PAGES = {183--203},
      ISSN = {0075-4102,1435-5345},
   MRCLASS = {32Q20 (32A25 32Q28 53C55)},
  MRNUMBER = {4193467},
MRREVIEWER = {Sungmin\ Yoo},
       DOI = {10.1515/crelle-2020-0012},
       URL = {https://doi.org/10.1515/crelle-2020-0012},
}

@article {IshiParkYamamori2017BergmanKernel,
    AUTHOR = {Ishi, Hideyuki and Park, Jong-Do and Yamamori, Atsushi},
     TITLE = {Bergman kernel function for {H}artogs domains over bounded
              homogeneous domains},
   JOURNAL = {J. Geom. Anal.},
  FJOURNAL = {Journal of Geometric Analysis},
    VOLUME = {27},
      YEAR = {2017},
    NUMBER = {2},
     PAGES = {1703--1736},
      ISSN = {1050-6926},
   MRCLASS = {32A25 (32A07 32M10)},
  MRNUMBER = {3625170},
MRREVIEWER = {Xieping Wang},
       DOI = {10.1007/s12220-016-9737-4},
       URL = {https://doi.org/10.1007/s12220-016-9737-4},
}

@book {Kaneyuki1971BOOKHomogeneousBoundedDomains,
    AUTHOR = {Kaneyuki, Soji},
     TITLE = {Homogeneous bounded domains and {S}iegel domains},
    SERIES = {Lecture Notes in Mathematics},
    VOLUME = {Vol. 241},
 PUBLISHER = {Springer-Verlag, Berlin-New York},
      YEAR = {1971},
     PAGES = {v+89},
   MRCLASS = {32M15 (22E99 32M10)},
  MRNUMBER = {338467},
MRREVIEWER = {A.\ Kor\'anyi},
       DOI = {10.1007/BFb0060967},
       URL = {https://doi.org/10.1007/BFb0060967},
}

@article {Kobayashi1959GeometryBounded,
    AUTHOR = {Kobayashi, Shoshichi},
     TITLE = {Geometry of bounded domains},
   JOURNAL = {Trans. Amer. Math. Soc.},
  FJOURNAL = {Transactions of the American Mathematical Society},
    VOLUME = {92},
      YEAR = {1959},
     PAGES = {267--290},
      ISSN = {0002-9947,1088-6850},
   MRCLASS = {57.00},
  MRNUMBER = {112162},
MRREVIEWER = {E.\ Vesentini},
       DOI = {10.2307/1993156},
       URL = {https://doi.org/10.2307/1993156},
}

@article {LoiMossa2023HolomorphicIsometries,
    AUTHOR = {Loi, Andrea and Mossa, Roberto},
     TITLE = {Holomorphic isometries into homogeneous bounded domains},
   JOURNAL = {Proc. Amer. Math. Soc.},
  FJOURNAL = {Proceedings of the American Mathematical Society},
    VOLUME = {151},
      YEAR = {2023},
    NUMBER = {9},
     PAGES = {3975--3984},
      ISSN = {0002-9939,1088-6826},
   MRCLASS = {53C55 (32Q15 32T15)},
  MRNUMBER = {4607641},
       DOI = {10.1090/proc/16335},
       URL = {https://doi.org/10.1090/proc/16335},
}

@misc{LoiMossaZuddas2025BergmanMetric,
      title        = {On the Bergman metric of Cartan--Hartogs domains},
      author       = {Andrea Loi and Roberto Mossa and Fabio Zuddas},
      year={2025},
      eprint={2510.06405},
      archivePrefix={arXiv},
      primaryClass={math.CV},
      url={https://arxiv.org/abs/2510.06405}, 
}

@article {MossaZedda2022SymplecticGeometry,
    AUTHOR = {Mossa, Roberto and Zedda, Michela},
     TITLE = {Symplectic geometry of {C}artan-{H}artogs domains},
   JOURNAL = {Ann. Mat. Pura Appl. (4)},
  FJOURNAL = {Annali di Matematica Pura ed Applicata. Series IV},
    VOLUME = {201},
      YEAR = {2022},
    NUMBER = {5},
     PAGES = {2315--2339},
      ISSN = {0373-3114},
   MRCLASS = {53D05 (32M15)},
  MRNUMBER = {4491467},
MRREVIEWER = {Zuzanna Szancer},
       DOI = {10.1007/s10231-022-01201-1},
       URL = {https://doi.org/10.1007/s10231-022-01201-1},
}

@article {NemirovskiiShafikov2006ConjectureCheng,
    AUTHOR = {Nemirovski\u{\i}, S. Yu. and Shafikov, R. G.},
     TITLE = {Conjectures of {C}heng and {R}amadanov},
   JOURNAL = {Uspekhi Mat. Nauk},
  FJOURNAL = {Uspekhi Matematicheskikh Nauk},
    VOLUME = {61},
      YEAR = {2006},
    NUMBER = {4(370)},
     PAGES = {193--194},
      ISSN = {0042-1316,2305-2872},
   MRCLASS = {32F45 (32Q20 32T15)},
  MRNUMBER = {2278844},
MRREVIEWER = {Harold\ P.\ Boas},
       DOI = {10.1070/RM2006v061n04ABEH004349},
       URL = {https://doi.org/10.1070/RM2006v061n04ABEH004349},
}

@misc{Palmieri2025BergmanMetricsInducedBall,
      title={Bergman metrics induced by the ball}, 
      author={Matteo Palmieri},
      year={2025},
      eprint={2510.17618},
      archivePrefix={arXiv},
      primaryClass={math.CV},
      url={https://arxiv.org/abs/2510.17618}, 
}

@article{PanWangZhang2016KahlerEinstein,
  author   = {Pan, Lishuang and Wang, An and Zhang, Liyou},
  title = {On the {K{\"a}hler--Einstein} metric of {B}ergman--{H}artogs domains},
  journal  = {Nagoya Math. J.},
  fjournal = {Nagoya Mathematical Journal},
  volume   = {221},
  year     = {2016},
  number   = {1},
  pages    = {184--206},
  issn     = {0027-7630,2152-6842},
  mrclass  = {32F45 (32A07 32A25)},
  mrnumber = {3508747},
  doi      = {10.1017/nmj.2016.4},
  url      = {https://doi.org/10.1017/nmj.2016.4}
}

@book {PyateskiiShapiro1969BOOKAutomorphicFunctions,
    AUTHOR = {Pyateskii-Shapiro, I. I.},
     TITLE = {Automorphic functions and the geometry of classical domains},
    SERIES = {Mathematics and its Applications, Vol. 8},
      NOTE = {Translated from the Russian},
 PUBLISHER = {Gordon and Breach Science Publishers, New York-London-Paris},
      YEAR = {1969},
     PAGES = {viii+264},
   MRCLASS = {32.65},
  MRNUMBER = {252690},
}

@article {Rosay1979SurUneCaracterisation,
    AUTHOR = {Rosay, Jean-Pierre},
     TITLE = {Sur une caract\'erisation de la boule parmi les domaines de
              {${\bf C}\sp{n}$}\ par son groupe d'automorphismes},
   JOURNAL = {Ann. Inst. Fourier (Grenoble)},
  FJOURNAL = {Universit\'e{} de Grenoble. Annales de l'Institut Fourier},
    VOLUME = {29},
      YEAR = {1979},
    NUMBER = {4},
     PAGES = {ix, 91--97},
      ISSN = {0373-0956,1777-5310},
   MRCLASS = {32F15 (32M05)},
  MRNUMBER = {558590},
MRREVIEWER = {D.\ N.\ Akhiezer},
       DOI = {10.5802/aif.768},
       URL = {https://doi.org/10.5802/aif.768},
}

@article {Seo2018BiholomorphismsHartogsSiegel,
    AUTHOR = {Seo, Aeryeong},
     TITLE = {Biholomorphisms between {H}artogs domains over homogeneous
              {S}iegel domains},
   JOURNAL = {Internat. J. Math.},
  FJOURNAL = {International Journal of Mathematics},
    VOLUME = {29},
      YEAR = {2018},
    NUMBER = {8},
     PAGES = {1850057, 12},
      ISSN = {0129-167X,1793-6519},
   MRCLASS = {32M15 (32A07 32H35)},
  MRNUMBER = {3835733},
MRREVIEWER = {Feng\ Rong},
       DOI = {10.1142/S0129167X1850057X},
       URL = {https://doi.org/10.1142/S0129167X1850057X},
}

@misc{Sha2025KahlerRicciSoliton,
      title={K\"ahler-Ricci solitons on bounded pseudoconvex domains}, 
      author={Zehao Sha},
      year={2025},
      eprint={2412.03345},
      archivePrefix={arXiv},
      primaryClass={math.CV},
      url={https://arxiv.org/abs/2412.03345}, 
}

@article {WangYinZhangRoos2006KahlerEinstein,
    AUTHOR = {Wang, An and Yin, Weiping and Zhang, Liyou and Roos, Guy},
     TITLE = {The {K}\"{a}hler-{E}instein metric for some {H}artogs domains over
              symmetric domains},
   JOURNAL = {Sci. China Ser. A},
  FJOURNAL = {Science in China. Series A. Mathematics},
    VOLUME = {49},
      YEAR = {2006},
    NUMBER = {9},
     PAGES = {1175--1210},
      ISSN = {1006-9283},
   MRCLASS = {32Q20 (37M15)},
  MRNUMBER = {2284205},
MRREVIEWER = {Miroslav Engli\v{s}},
       DOI = {10.1007/s11425-006-0230-6},
       URL = {https://doi.org/10.1007/s11425-006-0230-6},
}

@article {Wong1977CharacterizationUnitBall,
    AUTHOR = {Wong, B.},
     TITLE = {Characterization of the unit ball in {${\bf C}\sp{n}$} by its
              automorphism group},
   JOURNAL = {Invent. Math.},
  FJOURNAL = {Inventiones Mathematicae},
    VOLUME = {41},
      YEAR = {1977},
    NUMBER = {3},
     PAGES = {253--257},
      ISSN = {0020-9910,1432-1297},
   MRCLASS = {32F15},
  MRNUMBER = {492401},
MRREVIEWER = {Herbert\ Alexander},
       DOI = {10.1007/BF01403050},
       URL = {https://doi.org/10.1007/BF01403050},
}

@incollection {Yau1982ProblemSection,
    AUTHOR = {Yau, Shing Tung},
     TITLE = {Problem section},
 BOOKTITLE = {Seminar on {D}ifferential {G}eometry},
    SERIES = {Ann. of Math. Stud., No. 102},
     PAGES = {669--706},
 PUBLISHER = {Princeton Univ. Press, Princeton, NJ},
      YEAR = {1982},
      ISBN = {0-691-08268-5},
   MRCLASS = {53Cxx (58-02)},
  MRNUMBER = {645762},
MRREVIEWER = {Yu.\ Burago},
}

@article {YinLuRoos2004NewClasses,
    AUTHOR = {Yin, Weiping and Lu, Keping and Roos, Guy},
     TITLE = {New classes of domains with explicit {B}ergman kernel},
   JOURNAL = {Sci. China Ser. A},
  FJOURNAL = {Science in China. Series A. Mathematics},
    VOLUME = {47},
      YEAR = {2004},
    NUMBER = {3},
     PAGES = {352--371},
      ISSN = {1006-9283},
   MRCLASS = {32A25 (32M15)},
  MRNUMBER = {2078348},
       DOI = {10.1360/03ys0090},
       URL = {https://doi.org/10.1360/03ys0090},
}

\end{document}